%% file: main.tex
\documentclass[11pt]{amsart}
\usepackage[left=1.0in,top=0.95in,right=1.0in,bottom=0.95in]{geometry}

\usepackage{graphicx}
\usepackage{amsmath,amsthm,amssymb,amsfonts,amscd,enumitem,booktabs,float}
\usepackage[dvipsnames]{xcolor}
\usepackage{tikz}
\usepackage{caption}
\usepackage[hidelinks]{hyperref}
\usetikzlibrary{calc}
\def\edgewidth{0.83333pt}
\def\vertexsize{2pt}

\newcommand{\vertex}[2][1]{%
  \fill (#2) circle [radius=#1*\vertexsize];
}
\newtheorem{Th}{Theorem}[section]
\newtheorem{Lemma}[Th]{Lemma}
\newtheorem{Cor}[Th]{Corollary}
\newtheorem{Def}[Th]{Definition}
\newtheorem{Prop}[Th]{Proposition}
\theoremstyle{definition}

\newtheorem{Ex}[Th]{Example}

\newcounter{commentlabel}
\newlength{\poincarecommentlift}
\makeatletter
\DeclareRobustCommand{\COMMENT}[1]{\@bsphack%
	\stepcounter{commentlabel}%
	\vbox to0pt{%
		\setlength{\fboxrule}{0.75pt}%
		\setlength{\fboxsep}{0.75pt}%
		\setlength{\poincarecommentlift}{1ex}%
		\addtolength{\poincarecommentlift}{\fboxrule}%
		\addtolength{\poincarecommentlift}{\fboxsep}%
		\vss\color{red}%
		\rlap{\rlap{\vrule height\poincarecommentlift width\fboxrule}\raise \poincarecommentlift%
		\hbox{\fcolorbox{red}{yellow}{%
\normalfont\footnotesize\ttfamily\bfseries\thecommentlabel}}}}%
	\marginpar{\noindent\raggedright%
	\textbf{\color{red}\thecommentlabel}:\thinspace\footnotesize#1}%
}

\title{Divisors and Harmonic Morphisms on Metric Graphs of Pseudocompact Type}

\author{Benjamin Mascuch}
\author{Montserrat Teixidor i Bigas}

\address{Benjamin Mascuch \\ University of Cambridge, Cambridge, UK}
\email{bam62@cam.ac.uk}
\address{Montserrat Teixidor i Bigas \\ Tufts University, Medford, USA}
\email{mteixido@tufts.edu}

\date{}

\begin{document}
\thispagestyle{empty}
\vspace*{-.9cm}
\begin{abstract}
A metric graph is of pseudocompact type if identifying parallel edges produces a tree. We give a constructive proof that, for such graphs, divisorial $d$-gonality is equivalent to the existence of a degree $d$ harmonic morphism to a tree. 
This mirrors the algebraic correspondence, for curves of compact type, between limit linear series of dimension one and admissible covers. We also deduce lifting results for positive-rank divisors, with a genus-preserving refinement when identifying parallel edges produces a path.
Finally, we study the Brill--Noether theory in the path case.
\end{abstract}
\maketitle

\vspace*{-0.9cm}

\section{Introduction}

A recurring theme in tropical geometry is that a classical algebro-geometric statement which fails for its natural tropical analogue is often, in some sense, ``almost true.'' What one first writes down tropically may be close to the right statement and, by slightly adjusting the tropical notion or restricting to a sufficiently well-behaved class of objects, one can recover a true statement that closely resembles the classical one.

We give two illustrative examples. Unlike the classical case, a tropical curve is not in general determined by its principally polarized Jacobian, but Caporaso and Viviani characterize the fibers of the tropical Torelli map and recover a strong Torelli theorem for $3$-connected graphs \cite{CV}. Likewise, in Brill--Noether theory, the dimension of a tropical Brill--Noether locus need not agree with its classical counterpart, but the expected dimension can be recovered by replacing dimension with an extended notion of rank \cite{LPP}.

The motivating example for this paper is gonality. For a smooth projective curve $C$, its gonality is both the least degree of a divisor of rank at least $1$ and the least degree of a nonconstant morphism $C\to\mathbb P^1$. Indeed, a basepoint-free $g^1_d$ determines such a morphism, while the fibers of a degree-$d$ morphism form a basepoint-free $g^1_d$. Since a positive-rank divisor of minimal degree cannot have a base point, these two definitions give the same invariant.

Both descriptions of gonality have natural tropical analogues. The divisorial gonality of a metric graph is the least degree of a divisor of rank at least $1$, while its geometric gonality is the least degree of a finite harmonic morphism from a tropical modification to a metric tree. The fiber of such a morphism is a divisor of rank at least $1$, so geometric gonality is at least divisorial gonality. The converse fails. In an example attributed to Luo in \cite[Example~5.13]{ABBR2}, a metric graph carries a basepoint-free divisor of degree $3$ and rank $1$ but does not admit a finite degree-$3$ harmonic morphism to a tree, even after tropical modification.

In low degree, however, the equivalence is much better behaved. Chan proved that the two notions agree in degree $2$ \cite{C}. Melo and Zheng established the analogous result in degree $3$ for $3$-edge-connected metric graphs \cite{MZ1}, and later extended it to graphs of lower edge connectivity, apart from necklace graphs \cite{MZ2}. Thus, the trigonal equivalence can be recovered by excluding a particular family of combinatorial types. This suggests that, although gonality itself may depend on the edge lengths, the obstruction to the equivalence is governed by the underlying graph. It is therefore natural to seek combinatorial types for which the two notions agree for every choice of metric.

A natural place to begin this search is with chains of loops. Their divisor theory has an especially explicit combinatorial description, which helps explain why they have appeared so often in tropical Brill--Noether theory, beginning with the tropical proof of the Brill--Noether theorem \cite{CDPR} and continuing through work on special divisors, lifting, and limit linear series \cite{Pfl,CJP,LT}. Replacing the two edges of each loop in a chain of loops by an arbitrary number of edges (a banana graph) gives what we call a metric banana path. More generally, arranging banana graphs along an arbitrary tree gives the class that we call metric graphs of pseudocompact type. These are the metric analogues of the banana paths and banana trees recently studied in \cite{BCD}, where the authors suggested considering their metric counterparts.

We show that for these graphs the two notions of gonality always agree.

\begin{Th}
On every metric graph of pseudocompact type, divisorial and geometric gonality coincide.
\end{Th}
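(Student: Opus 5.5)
The inequality geometric gonality $\ge$ divisorial gonality holds for every metric graph, because a fiber of a finite harmonic morphism to a tree is a divisor of rank at least $1$. It therefore suffices to show the following: given a divisor $D$ of degree $d$ and rank at least $1$ on a metric graph $\Gamma$ of pseudocompact type, we can construct a tropical modification $\Gamma'$ and a finite harmonic morphism $\varphi\colon\Gamma'\to T$ of degree at most $d$ to a metric tree. A morphism of smaller degree is harmless after adding trees (or, for the statement, we take $d$ minimal).

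First I would reduce to a normal form for $D$. Write $\Gamma$ as banana graphs $B_v$ glued along a tree $\tau$ at cut vertices. Rank at least one gives, for each bridge or cut vertex $p$, a $p$-reduced representative, and we can track how much degree ``flows'' across each cut vertex. In the spirit of limit linear series on compact type curves, I would pick for each banana component $B_v$ the restricted divisor $D_v$, obtained by moving all degree from other components to the attaching points. Each $D_v$ has rank at least one on $B_v$ with prescribed vanishing orders at the attaching points, and the compatibility condition (vanishing orders adding to $d$ at each node) should follow from reducedness across cut vertices.

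Next I would build a harmonic morphism on each banana $B_v$ with prescribed ramification over its two endpoints. A banana with $m$ parallel edges of lengths $\ell_1,\dots,\ell_m$ between $x$ and $y$ admits a degree-$k$ harmonic morphism to a segment, for example by mapping edge $i$ with expansion factor $L/\ell_i$ so that all edges cover a common segment of length $L$. The degree and the local degrees at $x$ and $y$ can be adjusted by attaching extra leaves (tropical modifications) that map with prescribed expansion factors, so that the fiber degree over each endpoint matches the vanishing data of $D_v$. The rank-one divisor $D_v$ can be taken to be a sum of points in the pattern of the fiber after chip-firing. Along the bridges of the underlying tree, edges simply map isometrically or with the appropriate multiplicity.

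Finally I would glue. At each cut vertex the local multiplicities coming from the adjacent components must agree and sum to $d$, and any deficit is filled by attaching trees mapping to new branches of $T$. The target $T$ is then the tree obtained by gluing the target segments along $\tau$, and harmonicity at the cut vertices amounts to the matching of vanishing orders.

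The main obstacle is the first step: extracting from an arbitrary rank-one divisor consistent ramification data at every node, that is, proving a tropical analogue of the statement that a $g^1_d$ on compact type yields a refined limit $g^1_d$. Here the metric matters, since parallel edges of different lengths impose divisibility constraints on expansion factors. I would try to handle this by always mapping the whole banana onto one segment with slopes $L/\ell_i$, which are rational only up to rescaling; this forces real expansion factors, which harmonic morphisms allow only if they are integers. To fix this, I would instead use the fact that the degree of $D_v$ away from the endpoints is at least $m-1$ or so, and let each edge map with factor $1$ onto its own branch of a star-shaped target near $x$, joining these at $y$.
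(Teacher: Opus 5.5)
\textbf{There is a genuine gap: the construction is never supplied, and the one sketched for a banana fails.} Reducing to realizing each positive-rank divisor by a harmonic morphism to a tree is the right move; it is exactly Theorem~\ref{thm:pseudocompact-tree-induced}.

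Expansion factors $L/\ell_i$ break down for incommensurable lengths, as you note. The star-shaped fix does not repair this. If edge $i$ maps with factor $1$ onto its own branch, then either the branches are rejoined at the image of $y$, so the target is not a tree, or each edge folds back so that $y$ also maps to the center. The latter is just the degree-$2$ map with fiber $x+y$, and it cannot produce general divisors even after attaching trees. Take a banana with three edges of length $1$. The divisor $3x$ has positive rank by Lemma~\ref{lem:rank-determining-criterion}, since $3x\sim 3y$. Yet $2x$ is $y$-reduced (burning from $y$ consumes the whole graph), so $3x\not\sim x+y+E$ for every effective $E$. Instead, $3x$ is realized by mapping all three edges with factor $1$ onto a single segment.

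Your ``fact'' that the divisor has degree at least $m-1$ away from the endpoints is false: $x+y$ and $3x$ have none there. Realizing an arbitrary positive-rank divisor on one banana is already the theorem for that graph, so it cannot be treated as a local lemma. Your first step (consistent data at cut vertices) is only asserted. You also do not address bananas across which the reduced divisor does not change ($D_u=D_v$), where the map must contract the banana. There the paper needs only non-degeneracy, which is all $d$-gonality asks for; Lemma~\ref{lem:finite-tropical-modification} finitizes if desired.

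\textbf{The missing idea is to read the map off from a rational function rather than design it.}
\begin{itemize}
\item For adjacent $u,v$ in $T$, take $f_{uv}$ with $\operatorname{div}(f_{uv})=D_u-D_v$, the difference of the reduced representatives.
\item By Lemma~\ref{lem:agreement-on-components-general}, $f_{uv}$ has zero slope off $B_{uv}$. By Lemma~\ref{lem:reduced-maximum-locus}, it is minimized at $u$ and maximized at $v$.
\item Summing these along a rooted orientation of $T$ gives one function $f$ sending each $B_{uv}$ onto $[f(u),f(v)]$.
\item Contracting the edges of $T$ with $f(u)=f(v)$ yields a metric quotient $\Omega$ of $T$ and a map $\Gamma\to\Omega$. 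Its expansion factors are the slopes of $f$, hence automatically integers, with no commensurability condition.
\item Suppose $p$ maps into the interior of an edge $xy$ of $\Omega$, and write $D_x$ for the reduced divisor at any preimage of $x$. The sums $a_y(p)$, $a_x(p)$ of expansion factors toward $y$ and $x$ then satisfy $a_y(p)-a_x(p)=D_x(p)-D_y(p)$.
\item So attaching $D_q(p)$ copies of the branch $\Omega_{p,q}$ at $p$ restores harmonicity. Over a vertex $x$, every directional degree becomes $D_x(p)$, which is precisely the matching at cut vertices you wanted.
\item Non-degeneracy follows from $D_v(v)\geq1$ (Lemma~\ref{lem:rank-determining-criterion}). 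Lemma~\ref{lem:agreement-on-components-general} shows that the retracted fiber over a vertex $z$ is exactly $D_z\sim D$.
\end{itemize}
The consistent ``ramification data'' you were trying to extract in advance is thus encoded in $f$ itself.
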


Unlike the low-degree results above, this statement is not restricted to a fixed degree. Indeed, metric graphs of pseudocompact type can have arbitrarily large gonality.

We actually prove a stronger statement that holds at the level of individual positive-rank divisors and more closely parallels the algebraic situation. To state it, we introduce tree-induced and path-induced divisors. A divisor $D$ on a metric graph $\Gamma$ is tree-induced if it is linearly equivalent to the retraction of a fiber of a non-degenerate harmonic morphism from a tropical modification of $\Gamma$ to a metric tree. It is path-induced if the target can be chosen to be a path. Every tree-induced divisor has positive rank, while the converse fails as alluded to above.

For metric graphs of pseudocompact type, this obstruction disappears.

\begin{Th}\label{thm:pseudocompact-tree-induced}
On metric graphs of pseudocompact type, every positive-rank divisor is tree-induced.
\end{Th}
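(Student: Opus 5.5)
Let $\Gamma$ be a metric graph of pseudocompact type, with underlying tree $T$ obtained by identifying parallel edges. Each vertex $v$ of $T$ is a point $p_v$ of $\Gamma$, and each edge $e=uv$ of $T$ corresponds to a banana $B_e$: a bundle of $m_e$ parallel edges of lengths $\ell_{e,1},\dots,\ell_{e,m_e}$ joining $p_u$ and $p_v$. Let $D$ be a divisor of positive rank. The plan is to replace $D$ by a normalized representative, build a tree $T'$ and a harmonic morphism from a tropical modification of $\Gamma$ to $T'$ explicitly, banana by banana, and check that a fiber retracts to a divisor equivalent to $D$.

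\textbf{Normalization.} First I reduce $D$ to a divisor of the form $\sum_v a_v p_v$ plus chips in the interiors of bananas, distributed in a controlled way. I would use reduced divisors: fix a root $r=p_{v_0}$ and take the $r$-reduced representative $D_0$. Rank at least $1$ forces $D_0(r)\ge 1$. Moving a chip along a banana by Dhar burning gives a combinatorial description of when a point $q$ in the interior of $B_e$ can be reached. Concretely, I would prove an analogue of the chain-of-loops lemma: for each edge $e$, define the ``flow'' $f_e$, which is the number of chips that can be pushed across $B_e$ from the side containing $r$. Positive rank should then be equivalent to two conditions. For every banana with $m_e\ge2$ we need $f_e\ge m_e$, or at least a condition guaranteeing that every point of every parallel edge can be reached. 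For bridges ($m_e=1$), $f_e\ge1$ suffices. This is the analogue of Luo's rank-determining set argument: it suffices to test points $p_v$ and one interior point per parallel edge. The key structural claim is that minimal-degree witnesses are cut out by the tree structure: $D\sim \sum_v c_v p_v$ with ``degree across each banana'' at least $m_e$ when $m_e\ge 2$.

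\textbf{Construction of the morphism.} Given the normalized data, I would define the target tree $T'$ by taking a copy of $T$ and attaching leaves. The morphism $\varphi$ sends each $p_v$ to the vertex $v$. On $B_e$ with flow $d_e$, each parallel edge $e_i$ maps onto the edge $e$ of $T'$ with slope $s_i$. We need $\sum_i s_i=d_e$ and the lengths must match: $\ell_{e,i}s_i=\text{const}=\lambda_e$ along the image edge. Since the slopes must be integers, this generally fails. Following the standard trick (Chan, Melo--Zheng), I would instead subdivide the image and let each $e_i$ map with slope $1$ onto a path in $T'$ that folds: $e_i$ maps isometrically up to a branch point and the excess length goes to an attached leaf segment. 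To keep the degree $d$ constant everywhere, I would add tropical modifications, meaning infinite or finite trees glued to $\Gamma$, that map onto the branches of $T'$ not otherwise covered. Harmonicity at $p_v$ requires the local degree to be the same in every direction of $T'$. That reduces to the equalities $d=\sum(\text{slopes})$ on each side, which I would arrange by padding with contracted or leaf modifications. Non-degeneracy follows because no edge of $\Gamma$ is contracted.

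\textbf{Fiber and main obstacle.} Finally, I would check that the fiber over $v_0$ retracts to $D_0$ up to equivalence. The fiber over a point deep in the leaf attached at $r$ retracts to $d\cdot r$ minus corrections. A careful choice of where leaves attach should reproduce exactly the chip configuration $D_0$, using the fact that moving a fiber point along $T'$ changes the retracted divisor by a principal divisor. I expect the main obstacle to be the normalization step. One has to show that every positive-rank divisor, not just minimal-degree ones, admits a representative whose chip distribution is compatible with integer-slope data on every banana simultaneously. This is where non-minimal degree and base points enter. I would handle it by induction on the number of vertices of $T$: cut $\Gamma$ at a leaf banana, apply induction to the remaining graph with the divisor pushed via the flow $f_e$, and glue the resulting morphism to an explicit local morphism on the leaf banana, with extra degree absorbed by leaf modifications.
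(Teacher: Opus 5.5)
Your proposal has a genuine gap. The map you build on each banana does not depend on $D$, and the normal form you aim for does not exist.

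\textbf{The construction forces degree at least $m_e$.} You send distinct vertices of $T$ to distinct vertices of $T'$. Then each of the $m_e$ parallel edges of $B_e$ is a path whose image covers the geodesic from $u$ to $v$. So a generic point of that geodesic has a preimage with positive expansion factor on every parallel edge, giving $\deg\varphi\geq m_e$; every retracted fiber has this degree too. But on a single genus-$2$ banana ($m_e=3$), the divisor $v+w$ has rank $1$ and degree $2$, since it is a fiber of the folding involution. Your construction can never produce it, whatever slopes or folds you choose. The same example refutes the claim that positive-rank divisors have ``degree across each banana at least $m_e$.'' It also shows that contractions are sometimes forced, so non-degeneracy cannot come from ``no edge is contracted.''

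\textbf{The fiber step and the normal form.} With slope-$1$ folds on every edge, nothing ties the fibers to $D$. The sentence that a careful choice of leaves ``should reproduce exactly $D_0$'' therefore carries the entire content of the theorem unproved. Your planned normal form is also unavailable in general: every degree-$3$ divisor on a genus-$2$ banana has rank $1$, yet only four effective degree-$3$ divisors are supported on $\{v,w\}$.

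\textbf{The missing idea: let the reduced divisors dictate the map.}
\begin{enumerate}[label=(\roman*)]
\item For adjacent $u,v$ in $T$, take $f_{uv}$ with $\operatorname{div}(f_{uv})=D_u-D_v$. Lemma~\ref{lem:agreement-on-components-general} (reduced divisors at points outside a component of $\Gamma\setminus\{s\}$ agree on it) shows $f_{uv}$ has zero slope off $B_{uv}$. Lemma~\ref{lem:reduced-maximum-locus} shows $f_{uv}$ is minimized at $u$ and maximized at $v$.
\item Sum these functions along $T$, oriented away from a root, to get one function $f$. Contract the edges of $T$ with $D_u=D_v$; if all are contracted, use the map to a point with $F_\varphi=D_r$. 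Give each remaining edge length $f(v)-f(u)$ and map $\Gamma$ to this tree via $f$.
\item The expansion factors are then the slopes of $f$, so your integrality and length-matching worry disappears. Breakpoints occur where $D_u$ or $D_v$ has chips.
\item The identity $\operatorname{div}(f_{uv})=D_u-D_v$ shows that, at each $p$, the expansion factors toward a target direction $q$ plus $D_q(p)$ is independent of $q$. Attaching $D_q(p)$ copies of the branch of the target toward $q$ therefore makes the map harmonic.
\item The fiber over a vertex $z$ then retracts exactly to $D_z\sim D$. Non-degeneracy is $m_\varphi(v)=D_v(v)\geq1$, which follows from positive rank because $V_{-}(\Gamma)$ is rank-determining.
\end{enumerate}
No separate normalization step is needed: the reduced divisors at the vertices of $T$ play that role.
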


The proof of this theorem is constructive. Let $\Gamma$ be a metric graph of pseudocompact type and let $D$ be a positive-rank divisor on $\Gamma$. The simplification of the canonical loopless model of $\Gamma$ is a tree $T$. We compare the reduced representatives of $D$ at adjacent vertices of $T$ by rational functions and combine these functions into a single function on $\Gamma$. This function naturally determines a metric quotient of $T$ and a map from $\Gamma$ to this metric tree. The same reduced representatives of $D$ also measure the failure of this map to be harmonic. We repair this failure by attaching the necessary subtrees of the target to the source, obtaining a suitable harmonic morphism whose fibers retract to divisors linearly equivalent to $D$.

Before proving this theorem, we study tree-induced and path-induced divisors on arbitrary metric graphs. We show that a divisor on a metric graph $\Gamma$ is tree-induced if and only if it is linearly equivalent to the tropicalization of a positive-rank divisor on a smooth proper curve having $\Gamma$ as a skeleton. In the path-induced case, the curve may be chosen to have the same genus as $\Gamma$. We also show that being tree-induced is preserved under the addition of effective divisors and give some sufficient numerical criteria. In particular, we show that every positive-rank divisor on a hyperelliptic metric graph is tree-induced.

Combining Theorem~\ref{thm:pseudocompact-tree-induced} with the above characterization of being tree-induced gives a lifting result for metric graphs of pseudocompact type. Every positive-rank divisor on such a graph lifts, up to linear equivalence, to a positive-rank divisor on a smooth proper curve having the graph as a skeleton. For metric banana paths, the lift can moreover be chosen genus-preserving. See Corollary~\ref{cor:pseudocompact-lifting} for the precise statement.

There is a closely related classical picture for degenerating algebraic curves. Harris and Mumford introduced admissible covers deforming degree-$d$ maps to $\mathbb P^1$ in order to study the closure of the $d$-gonal divisor in $\overline{\mathcal M}_g$ \cite{HMu}. Eisenbud and Harris later introduced limit linear series on curves of compact type, generalizing linear series on smooth curves to carry out Brill--Noether arguments by degeneration \cite{EH}. These two theories encode different aspects of the same degeneration. Admissible covers retain the map itself, while limit linear series retain compatible linear series on the irreducible components. We give an explicit construction of an admissible cover from any limit linear series of dimension $1$ on a curve of compact type after passing to a semistably equivalent source, demonstrating the parallel with our tropical results.

We also study the Brill--Noether theory and more specifically the gonality of metric banana paths. Chains of loops have been useful not only in the study of Brill--Noether theory for general curves, but also in the study of Hurwitz spaces. However, their Clifford index is completely determined by their gonality \cite{Co}, so they may not be able to capture the behavior of curves of higher dimension Clifford indices. It is therefore natural to look for a larger class of graphs whose divisor theory remains explicit. Metric banana paths may provide one such class. We prove that a metric banana path cannot be Brill--Noether general if any of its bananas has genus at least three. On the other hand, chains of genus-two bananas behave much more like chains of loops. In particular, we show that, for generic edge lengths, they have the same gonality as a general curve of their genus. We also construct loci of prescribed gonality in the expected codimension.

The paper is organized as follows. In Section~\ref{sec:algebraic-case}, we relate limit linear series on curves of compact type to admissible covers. In Section~\ref{sec:tropical-prelims}, we review the tropical background needed throughout the paper. In Section~\ref{sec:tree-induced-divisors}, we develop the theory of tree-induced and path-induced divisors on arbitrary metric graphs. In Section~\ref{sec:pseudocompact-type}, we prove the main theorem for metric graphs of pseudocompact type and derive its consequences for gonality and lifting. In Section~\ref{sec:MBP-BN-theory}, we study Brill--Noether theory on metric banana paths.

\section{The algebraic case for curves of compact type}\label{sec:algebraic-case}

As mentioned in the introduction, admissible coverings were introduced in  \cite{HMu} in a ad-hoc basis to compactify the locus of curves of odd genus $g$ 
having a $g^1_{\frac{g+1}2}$.
In particular, the target curve was taken to be rational and only codimension one subloci of $\overline{{\mathcal M}^1_{g,d}}$  had to be considered.
A  general definition of admissible covering appears in \cite{HMo} Def 3-149 and reads as follows (higher order ramification at the point $p_i$ could be easily added.):
\begin{Def}\label{def:admiscov} Given a stable pointed curve $(B,p_1,\dots,p_n)$, an  admissible cover is a stable curve $C$ together with a regular map $f:C\to B$ such that 
\begin{itemize}
\item The inverse image of the non-singular points of $B$  is the set of non-singular points in $C$.
\item The map is simply ramified over the points $p_i$.
\item The inverse image of the nodes in $B$ are the nodes in $C$ and the order of ramification of the two branches of a given node match:
 that is if  locally nodes in $B, C$ are given by $x_by_b=0, x_cy_c=0$ respectively, then $f^*(x_b)=x_c^m, f^*(y_b)=y_c^m$ for the same value of $m$ (depending on the node).
 \end{itemize} 
\end{Def}

A nodal curve is said to be of {\bf compact type} if its dual graph is a tree or equivalently its Jacobian is compact. 
Let $C$ be a curve of compact type, $C=\cup C_i$ where the $C_i$ are irreducible components.
List  the points on $C_i$ $P_{i,i_1}, \dots, P_{i, i_{k_i}}$ so that $C$ is obtained from the $C_i$ by identifying   $P_{i,i_t}\in C_i$  with  $P_{i_t,i}\in C_{i_t}$.
Limit linear series on curves of compact type were defined by Eisenbud and Harris (see \cite{EH}).

\begin{Def}\label{def:lls} Given a curve of compact type, $C=\cup C_i$ as above, a limit linear series of degree $d$ and dimension $r$  is the data of 
\begin{itemize}
\item A line  bundle $L_i$ of degree $d$ on each component $C_i$.
\item A space of sections $V_i$ of dimension $r+1$  for each $L_i$.
 \end{itemize} 
 satisfying the following condition: if $a^0_{i,i_t},\dots, a^r_{i,i_t}$ are the distinct orders of vanishing of the sections in $V_i$ at the node $P_{i,i_t}$, then $a^k_{i,i_t}+a^{r-k}_{i_t,i}\ge d$.
 When this inequality is an equality at every node and for every $k$, then the limit linear series is said to be refined.
\end{Def}

Recall that a connected  nodal curve is stable if  every rational component has at least three nodes.
From a semistable curve, one can obtain a stable curve of the same genus by contracting rational components with one or two nodes.
Conversely, from a stable curve, one can obtain a semistable one by adding a rational component to separate the two points that were identified at a node 
or by adding a rational curve attached to the rest at only one point.
Curves obtained from each other in this way are said to be semistably equivalent.

\begin{Lemma}\label{lem:refine-pencil}
Given a limit linear series on a curve of compact type $C$, there is a semistably equivalent curve $C'$  again of compact type, on which the induced limit linear series is refined with no fixed points away from the nodes.
\end{Lemma}
\begin{proof} If the limit linear series fails to be refined at some node, it is a standard procedure  to blow up that node to make it refined. 
Similarly, if there are fixed points, one can attach rational components at these points with suitable ramification order so that the old fixed points become nodes.
\end{proof}

Our goal is to show the following: 
\begin{Th} Let  $C=\cup_{i\in I} C_i$ be a curve of compact type.
Let  $(L_i, V_i)_{i\in I}$ be  a  limit linear series of dimension one on $C$.
There is then a curve $\widetilde{C}$ semistably equivalent to $C$ 
and an admissible cover from $\widetilde{C}$ to a tree  that induces the limit linear series on the original curve
\end{Th}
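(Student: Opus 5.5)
First apply Lemma~\ref{lem:refine-pencil}. This replaces $C$ by a semistably equivalent curve $C'$ of compact type on which the induced limit $g^1_d$ is refined and has no base points away from the nodes. On each component $C_i$ we then have a pencil $V_i\subset H^0(L_i)$. Its only possible base points are the nodes $P_{i,i_t}$, with multiplicity $a^0_{i,i_t}$. Removing them gives a basepoint-free pencil of degree $d_i=d-\sum_t a^0_{i,i_t}$, hence a morphism $f_i\colon C_i\to \mathbb P^1_i$ of degree $d_i$, possibly constant, which happens exactly when $d_i=0$. At a node $P_{i,i_t}$ the local ramification index of $f_i$ is $a^1_{i,i_t}-a^0_{i,i_t}$. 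The vanishing sequence is $(a^0,a^1)$, and after removing the base point the sections vanish to orders $0$ and $a^1-a^0$. Refinedness gives $a^0_{i,i_t}+a^1_{i_t,i}=d$ and $a^1_{i,i_t}+a^0_{i_t,i}=d$.

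Next I would build the target tree $B$ and the source $\widetilde C$. Take one copy $\mathbb P^1_i$ for each component with $d_i>0$. For an edge $i\!-\!j$ of the dual tree, glue $\mathbb P^1_i$ at $f_i(P_{i,j})$ to $\mathbb P^1_j$ at $f_j(P_{j,i})$. The resulting $B$ is a tree of rational curves. The problem is that $f_i^{-1}(f_i(P_{i,j}))$ contains, besides $P_{i,j}$ with multiplicity $e=a^1_{i,j}-a^0_{i,j}$, other points $Q$. These are the non-node preimages, and they must also map to the node of $B$ and become nodes of $\widetilde C$. Over the branch $\mathbb P^1_j$ (and everything beyond it in $B$) we must therefore attach, at each such $Q$ with multiplicity $m_Q$, a rational tail mapping to that side of $B$. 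Concretely, attach a copy of $\mathbb P^1$ mapping by $z\mapsto z^{m_Q}$ onto $\mathbb P^1_j$, totally ramified over the node. Recursively attach further copies mapping isomorphically or by the same power maps over the rest of the subtree of $B$ on that side, so that harmonicity (constant local degree) holds at every newly created node. Symmetrically, $f_j$ needs the same repair. Then check the degree count. Over $\mathbb P^1_i$ the total degree must be $d$, and it equals $d_i$ plus the contributions of all tails attached from other components. Here the identity $d-d_i=\sum_t a^0_{i,i_t}$ should be exactly what is needed. The degree over $\mathbb P^1_i$ contributed by the branch through $P_{i,i_t}$ equals $d-(\text{degree of the part of } C \text{ on the far side})$, which by the refined relations equals $a^0_{i,i_t}$. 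I would prove this by induction on the number of components, cutting the tree at a leaf. Components with $d_i=0$ are handled by contracting them, or by mapping them to a point as part of the semistable equivalence. Their nodes then get matched ramification $a^1-a^0$ from both neighbours, via the refined equalities.

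Then I verify the conditions of Definition~\ref{def:admiscov}. Nodes map to nodes, because every point over a node of $B$ is either an original node or a tail attachment point. The ramification indices on the two branches agree: at an original node this is $a^1_{i,j}-a^0_{i,j}=a^1_{j,i}-a^0_{j,i}$, which follows from subtracting the two refined equalities. At a tail node it holds by construction. Finally, the tails are rational with one or two nodes, possibly after sprouting further ones, so $\widetilde C$ is semistably equivalent to $C$. Marking the branch points on $B$ makes it stable. The induced series on $C_i$ is the pullback of $\mathcal O(1)$ by $f_i$, twisted by the base divisor $\sum a^0_{i,i_t}P_{i,i_t}$, which is $(L_i,V_i)$.

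I expect the main obstacle to be the bookkeeping of degrees in the second paragraph. One must make the attached rational trees consistent over an entire subtree of $B$ and show that the total degree is $d$ over every component of $B$. This includes the contracted components with $d_i=0$ and the iterated tails, and requires a careful induction on the tree using the refinement equalities.
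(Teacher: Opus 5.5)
Your proposal is correct and follows essentially the same route as the paper: refine via Lemma~\ref{lem:refine-pencil}, glue the targets of the basepoint-free pencils $f_i$ along the images of the nodes into a rational tree, attach copies of the far-side subtrees at the extra preimages of those node images, obtain matching ramification $a^1_{i,j}-a^0_{i,j}=a^1_{j,i}-a^0_{j,i}$ from the refinedness equalities, and check that the degree is $d$. Your per-branch identity, that the branch through $P_{i,i_t}$ contributes $a^0_{i,i_t}$ over the $i$-th target component, is an inductive version of the paper's global sum using the parent map $p(j)$, and it does hold.

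Two smaller remarks. Your power-map tails at ramified extra preimages cover a case the paper tacitly excludes by asserting that there is no ramification at the new nodes. Your $d_i=0$ case never arises, because a basepoint-free two-dimensional pencil always gives a nonconstant $f_i$. So the digression about contracting such components, or mapping them to a point (which an admissible cover would not allow anyway), can be dropped.
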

\begin{proof} From Lemma \ref{lem:refine-pencil}, we can assume that the series is refined and with no fixed points away from the nodes.
We will use the notation for the components and nodes as above  $C=\cup_{i\in I} C_i$  and if $C_i\cap C_{i_t}\not=\emptyset$, 
then the point of intersection comes from identifying $P_{i,i_t}$  with  $P_{i_t,i}$.

We are assuming that  $V_i$ is two dimensional and if we write  $a^0_{i,i_t}, a^1_{i,i_t}$ for the orders  of vanishing of the sections in $V_i$ at the node $P_{i,i_t}$, 
then $V_i(-\sum a^0_{i,i_t}P_{i,i_t})$ has no fixed points.
Therefore, $V_i(-\sum a^0_{i,i_t}P_{i,i_t})$ induces a map $f_i:C_i\to T_i$ where $T_i$ is a rational curve.
Write $P'_{i,i_t}$ for the image of $P_{i,i_t}$ by this map.
Construct a nodal curve  by taking $T=\cup_{i\in I} T_i$  and  identifying $P'_{i,i_t}$  with  $P'_{i_t,i}$.
As $C$ is of compact type and the dual graph of $T$ is the same as the dual graph of $C$, $T$ is a rational curve.
The inverse image of  $P'_{i,i_{t_0}}$ by $f_i$ contain  $P_{i,i_{t_0}}$ and  additional points $\bar P^1_{i,i_{t_0}},\dots , \bar P^{k(i,t_0)}_{i,i_{t_0}}$.
In fact, $k(i,t_0)$ is the degree of $L_i(-\sum_t a^0_{i,i_t}P_{i,i_t})$ minus the ramification at  $P_{i,i_{t_0}}$ of the linear series, 
that is  $d-\sum_t a^0_{i,i_t}-(a^1_{i,i_{t_0}}-a^0_{i,i_{t_0}})=d-\sum_{t\not=t_0} a^0_{i,i_t}-a^1_{i,i_{t_0}}$.
Let now $T^{i,t_0}$ be the connected component of $T-P'_{i,i_{t_0}}$ that does not contain $T_i$.
We construct $\widetilde{C}$ starting from $C$ and gluing to each of $\bar P^1_{i,i_{t_0}},\dots , \bar P^{k(i,t_0)}_{i,i_{t_0}}$ a copy of  $T^{i,t_0}$.
To construct the admissible cover from $\widetilde{C}$ to $T$, we use the functions $f_i:C_i\to T_i$ and the natural identifications of the copies of the $T^{i,t_0}$ glued to $C$ to the $T^{i,t_0}$ in $T$.

By assumption, the limit linear series is refined.
Therefore, $a^0_{i,i_t}+a^1_{i_t,i}=d, a^0_{i_t,i}+a^1_{i,i_t}=d$. Therefore, $a^1_{i,i_t}-a^0_{i,i_t}=a^1_{i_t,i}-a^0_{i_t,i}$, giving the third condition of an admissible covering.
At the new additional nodes of $C_1$, there is no ramification.

It remains to check that the map so obtained has degree $d$.
Fix a component $C_i$.
 For every other component $C_j$, there is a unique simple path in the dual graph of $C$ joining the vertex corresponding to $C_i$ to the vertex corresponding to $C_j$.
 For $j\not=i$, write $p(j)$ for the index of the vertex immediately preceding $j$ in this path.
 In particular $C_j$ and $C_{p(j)}$ intersect at the node obtained by gluing $P_{j,p(j)}\in C_j$ with $P_{p(j),j}\in C_{p(j)}$.
  This node contributes $d-\sum_ta^0_{j,t}-(a^1_{j,p(j)}-a^0_{j,p(j)})$ to the degree over $T_i$.
  The line bundle $L_i$ contributes $d-\sum_ta^0_{i,t}$ over $T_i$.
  Therefore, writing $N$ the number of components, the degree over the component $T_i$ is 
\[ d-\sum_ta^0_{i,t}+\sum _{j\not= i}(d-\sum_ta^0_{j,t}-(a^1_{j,p(j)}-a^0_{j,p(j)}))=Nd-\sum _ja^1_{j,p(j)} -\sum _{j}a^0_{p(j),j}  \]
As the linear series is refined, $a^1_{j,p(j)} +a^0_{p(j),j}=d$.
Therefore, the above sum is $d$ as needed.
\end{proof}

\section{Tropical Preliminaries}\label{sec:tropical-prelims}
We collect the tropical background and fix the notation and conventions used throughout the paper. Our conventions for metric graphs and divisor theory follow principally \cite{GK,C}, while for reduced divisors we follow \cite{L}. For harmonic morphisms, we use both the model-based and intrinsic points of view. We define harmonic morphisms through graph models, following \cite{Cap,C,MZ1}, while regarding the induced maps, slopes, and local degrees intrinsically on the underlying metric graphs, as in \cite{ABBR1,ABBR2}.

\subsection{Metric Graphs}
A graph $G$ consists of a finite vertex set $V(G)$, a finite edge set $E(G)$, and an endpoint map $\operatorname{end}_G:E(G)\to \operatorname{Sym}^2(V(G))$, where $\operatorname{Sym}^2(V(G))$ denotes unordered pairs of vertices, with repetition allowed. We abuse notation and write $e=uv$ to mean that $\operatorname{end}_G(e)=\{u,v\}$. If $e=vv$, then $e$ is called a loop edge. A length function on $G$ is a function $\ell:E(G)\to \mathbb R_{>0}$.

A metric graph $\Gamma$ is the geometric realization of a pair $(G,\ell)$, obtained by assigning to each edge $e$ of $G$ a length $\ell(e)$. In this case, $(G,\ell)$ is a model of $\Gamma$. We require metric graphs to be connected. A model is loopless if it has no loop edges.

For a point $p\in \Gamma$, the valence $\operatorname{val}_\Gamma(p)$ is the number of connected components of a sufficiently small punctured neighborhood of $p$. The points $p\in \Gamma$ with $\operatorname{val}_\Gamma(p)\neq 2$ form the canonical vertex set. If this set is empty, we choose and fix an arbitrary point of $\Gamma$ and call the resulting singleton the canonical vertex set. A finite set of points in $\Gamma$ containing the canonical vertex set is called a vertex set of $\Gamma$. A vertex set $V$ of $\Gamma$ naturally induces a model of $\Gamma$, which we denote by $(G_V(\Gamma), \ell_V(\Gamma))$. The canonical loopless vertex set is obtained from the canonical vertex set by adding the midpoint of every loop edge in the model that it induces. We denote the canonical loopless model by $(G_{-}(\Gamma),\ell_{-}(\Gamma))$ and write $V_{-}(\Gamma)$ for $V(G_{-}(\Gamma))$.

The genus of $\Gamma$ is $g(\Gamma)=b_1(\Gamma)$, where $b_1(\Gamma)$ is the first Betti number. Equivalently, for any model $(G,\ell)$ of $\Gamma$, $g(\Gamma)=|E(G)|-|V(G)|+1$.

A tropical modification of $\Gamma$ is a metric graph $\Gamma'$ containing $\Gamma$ as a closed metric subgraph such that the closure of each connected component of $\Gamma'\setminus \Gamma$ is a metric tree meeting $\Gamma$ in exactly one point. We write $\rho:\Gamma'\to \Gamma$ for the natural retraction.

\subsection{Divisors}
A divisor on $\Gamma$ is a finite formal sum $D=\sum_{x\in \Gamma}D(x)x$ with $D(x)\in \mathbb Z$. The degree of $D$ is $\deg(D)=\sum_{x\in \Gamma}D(x)$. The positive part of $D$ is $D^+=\sum_{D(x)>0}D(x)x$. If $D=D^+$, then $D$ is effective. We write $\operatorname{Div}(\Gamma)$ for the group of divisors on $\Gamma$ and $\operatorname{Div}^d(\Gamma)$ for the set of divisors of degree $d$.

A rational function on $\Gamma$ is a continuous piecewise linear function $f:\Gamma\to \mathbb R$ with integer slopes and finitely many domains of linearity. We write $\operatorname{Rat}(\Gamma)$ for the group of rational functions. The principal divisor $\operatorname{div}(f)$ is defined by declaring the coefficient of $\operatorname{div}(f)$ at a point $x\in \Gamma$ to be the sum of the outgoing slopes of $f$ at $x$. Two divisors $D$ and $D'$ are linearly equivalent, written $D\sim D'$, if their difference is principal. We write $\operatorname{Pic}(\Gamma)=\operatorname{Div}(\Gamma)/{\sim}$ and  $\operatorname{Pic}^d(\Gamma)=\operatorname{Div}^d(\Gamma)/{\sim}$.

The complete linear series of $D$ is $|D|=\{D'\geq0:D'\sim D\}$. A point $p\in \Gamma$ is a fixed point of $|D|$ if $D'(p)\geq1$ for every $D'\in |D|$.

The rank of a divisor is \[\operatorname{rk}(D)=\min\{\deg(E):E\geq0\text{ and }|D-E|=\varnothing\}-1.\]

The canonical divisor of $\Gamma$ is $K_\Gamma=\sum_{x\in\Gamma}(\operatorname{val}_\Gamma(x)-2)x$. A divisor $D$ is nonspecial if $\operatorname{rk}(K_\Gamma-D)=-1$.

Let $\rho:\Gamma'\to\Gamma$ be a tropical modification. The retraction induces a degree-preserving homomorphism $\rho_*:\operatorname{Div}(\Gamma')\to\operatorname{Div}(\Gamma)$ which sends principal divisors to principal divisors and induces an isomorphism $\rho_*:\operatorname{Pic}(\Gamma')\xrightarrow{\sim}\operatorname{Pic}(\Gamma)$. Moreover, $\operatorname{rk}_{\Gamma'}(D)=\operatorname{rk}_{\Gamma}(\rho_*(D))$ for every $D\in\operatorname{Div}(\Gamma')$.

\subsection{Reduced Divisors}
Let $D\in\operatorname{Div}(\Gamma)$ and $q\in\Gamma$.  A boundary point $x\in\partial A$ of a closed connected subset $A\subseteq\Gamma$ is $D$-saturated with respect to $A$ if $D(x)\geq\operatorname{outdeg}_A(x)$, where $\operatorname{outdeg}_A(x)$ is the number of tangent directions at $x$ leaving $A$. A divisor is $q$-reduced if it is effective away from $q$ and every nonempty closed connected subset of $\Gamma\setminus\{q\}$ has a boundary point that is not $D$-saturated. We write $D_q$ for the unique $q$-reduced divisor linearly equivalent to $D$.

A subset $S\subseteq\Gamma$ is rank-determining if, for every divisor $D$ and every $r\geq0$, one has $\operatorname{rk}(D)\geq r$ if and only if $|D-E|\neq\varnothing$ for every effective divisor $E$ of degree $r$ supported on $S$.

\begin{Lemma}\label{lem:rank-determining-criterion}
One has $\operatorname{rk}(D)\geq1$ if and only if $D_v(v)\geq1$ for every $v\in V_{-}(\Gamma)$.
\end{Lemma}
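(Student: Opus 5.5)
The forward direction is immediate from the definitions. If $\operatorname{rk}(D)\geq1$, then $|D-v|\neq\varnothing$ for every $v\in V_{-}(\Gamma)$. We use the standard property of reduced divisors: $|D-v|\neq\varnothing$ if and only if $D_v(v)\geq1$. Indeed, $D_v$ is effective if and only if $|D|\neq\varnothing$, and $(D-v)_v=D_v-v$. So the real content is the converse, which says that $V_{-}(\Gamma)$ is rank-determining in degree $1$. That is, it suffices to test $|D-p|\neq\varnothing$ at vertices of the canonical loopless model.

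For the converse, assume $D_v(v)\geq1$ for all $v\in V_{-}(\Gamma)$, and let $p\in\Gamma$ be arbitrary. If $p$ is a vertex we are done, so suppose $p$ lies in the interior of an edge $e$ of $G_{-}(\Gamma)$. Because the model is loopless, $e$ has two distinct endpoints $u\neq w$. Both are vertices, so there are effective divisors $D_u\sim D$ with $D_u(u)\geq1$ and $D_w\sim D$ with $D_w(w)\geq1$. I plan to run Dhar's burning algorithm from $p$ on $D_u$, or equivalently follow the continuous path of reduced divisors $D_q$ as $q$ moves from $u$ along $e$. Since $e$ is an edge of the model, its interior $e^\circ$ contains no vertices of valence other than $2$.

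Here is the key step. Consider $D_u$ and suppose $D_u(p)=0$, since otherwise we are done. Start a fire at $p$. It propagates along $e^\circ$ in both directions until it meets a chip of $D_u$ in $e^\circ$ or an endpoint. On the side towards $u$, the fire can be stopped only by chips on the segment $(p,u]$, and $u$ itself carries a chip. Take the closest chip $x$ to $p$ on the segment $[p,u]$. If the fire does not burn all of $\Gamma$, the unburnt set is a closed set $A$ whose boundary points are all saturated. We can then fire $A$ by the minimal distance. This moves chips of $D_u$ inside $e$ closer to $p$, and the result is again an effective equivalent divisor.

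Iterating this is the standard proof that $D_p$ is effective when $D$ has chips on both sides of the interval. Concretely, I would use the characterization that $D_p(p)\geq1$ holds when the fire started at $p$ is blocked in both directions along $e$. In $D_w$ the fire is blocked on the $w$-side by a chip in $(p,w]$. The obstacle is combining the two sides, since $D_u$ may have no chip on the $w$-side.

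To handle this, I would instead use that $q\mapsto D_q$ varies piecewise linearly along the edge. As $q$ moves from $u$ to $w$ through $e^\circ$, where all points have valence $2$, the coefficient $D_q(q)$ can only drop when a chip is left behind. Within a segment of valence-$2$ points, the reduced divisor changes only by sliding chips along $e$. The precise statement I would prove is this: if $D_u(u)\geq1$ and $D_w(w)\geq1$, then for $q\in e^\circ$ we have $D_q(q)\geq1$. Looseness of the model is exactly what guarantees $u\neq w$, so that both ends of $e$ supply a blocking chip. I expect this step — controlling $D_q$ along an edge whose two distinct endpoints both carry reduced chips — to be the main obstacle. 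I would handle it by contradiction. If $D_q(q)=0$, Dhar's fire from $q$ burns all of $\Gamma$ in $D_q$. Comparing $D_q$ with $D_u$, the fire in $D_u$ restricted to the segment $[u,q]$ must be blocked by a chip, and moving that chip toward $q$ along $e$ yields a $q$-reduced representative with a chip at $q$. By the uniqueness of reduced divisors, this contradicts $D_q(q)=0$.
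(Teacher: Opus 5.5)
Your reduction is sound. The forward direction and the identity $(D-v)_v=D_v-v$ are correct. The per-edge statement you isolate is also true and suffices: for an edge $e=uw$ with $u\neq w$, if $D_u(u)\geq1$ and $D_w(w)\geq1$, then $D_q(q)\geq1$ for every $q\in e^\circ$.

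\textbf{The gap.} You never prove this per-edge statement. Your final argument starts from $D_u$, takes a chip of $D_u$ on $[u,q]$, and ``moves it toward $q$,'' but it never uses $D_w(w)\geq1$. Moving a chip inside $e$ requires firing a set whose other boundary point is also saturated. That is exactly the two-sided problem you flagged and then left open. No argument of this shape can work, because one endpoint alone does not suffice. On a circle with $V_{-}(\Gamma)=\{u,w\}$ and $D=u$, one has $D_u(u)=1$. Yet $u$ is $q$-reduced for every $q\neq u$, so $D_q(q)=0$ at every interior point of either edge.

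\textbf{The fix.} Argue from $D_q$ instead of from $D_u$. Suppose $D_q(q)=0$.
\begin{enumerate}[label=(\arabic*)]
\item Suppose both $(q,u]$ and $(q,w]$ contain chips of $D_q$, and let $a,b$ be the nearest ones. Each component of $\Gamma\setminus(a,b)$ is a nonempty closed connected set avoiding $q$. Its boundary points lie in $\{a,b\}$, and each such point has outdegree $1$ (this is where $u\neq w$ is used) and carries a chip. So every boundary point is saturated, contradicting $q$-reducedness.
\item Hence one closed segment, say $[q,u]$, is chip-free. Closed connected sets $X\subseteq\Gamma\setminus\{u\}$ avoiding $q$ are handled by $q$-reducedness. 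If $X$ contains $q$, it has a chip-free boundary point on $[q,u)$, which is unsaturated.
\item So $D_q$ is also $u$-reduced. Thus $D_u=D_q$ and $D_u(u)=0$, a contradiction.
\end{enumerate}
Equivalently, the chip-free component of $\Gamma\setminus\operatorname{supp}(D_q)$ containing $q$ is a special open set in Luo's sense, so it must meet $V_{-}(\Gamma)$. This is the rank-one case of Luo's theorem. The paper does not reprove it; it cites \cite[Theorem~1.5]{L} that $V_{-}(\Gamma)$ is rank-determining and combines this with $|D-v|\neq\varnothing\iff D_v(v)\geq1$.
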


\begin{proof}
    By \cite[Theorem~1.5]{L}, the canonical loopless vertex set $V_{-}(\Gamma)$ is rank-determining. The lemma then follows from the fact that $|D-v|\neq\varnothing$ if and only if $D_v(v)\geq1$.
\end{proof}

\begin{Lemma}\label{lem:reduced-maximum-locus}
Let $D_q$ be $q$-reduced, and suppose that $f\in\operatorname{Rat}(\Gamma)$ satisfies $D_q+\operatorname{div}(f)\geq0$. Then $f$ attains its maximum at $q$.
\end{Lemma}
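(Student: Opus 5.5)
We argue by contradiction, using the $q$-reduced condition on the set where $f$ is maximal. Suppose $f$ does not attain its maximum at $q$, and let $M=\max_\Gamma f$. The maximum exists because $\Gamma$ is compact and $f$ is continuous. Let $A=\{x\in\Gamma: f(x)=M\}$. Since $f$ is piecewise linear with finitely many domains of linearity, $A$ is a closed subset with finitely many connected components. By assumption $q\notin A$, so $f(q)<M$. Choose a connected component $A_0$ of $A$. It is a nonempty closed connected subset of $\Gamma\setminus\{q\}$. Also $A_0\neq\Gamma$, since $q\notin A_0$, so $\partial A_0$ is nonempty.

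Next we compute slopes at a boundary point $x\in\partial A_0$. Every tangent direction at $x$ leaving $A_0$ also leaves $A$ locally, because components of $A$ are disjoint closed sets. Along such a direction $f$ drops strictly below $M$. Since slopes are integers, the outgoing slope of $f$ along that direction is at most $-1$. Along directions staying inside $A_0$, the slope is $0$. Hence
\[
\operatorname{div}(f)(x)\le -\operatorname{outdeg}_{A_0}(x).
\]
Combining this with $D_q+\operatorname{div}(f)\ge 0$ gives
\[
D_q(x)\ge -\operatorname{div}(f)(x)\ge \operatorname{outdeg}_{A_0}(x).
\]
So every boundary point of $A_0$ is $D_q$-saturated with respect to $A_0$.

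This contradicts the definition of $q$-reducedness, which requires that every nonempty closed connected subset of $\Gamma\setminus\{q\}$ have a non-saturated boundary point. Therefore $q\in A$, that is, $f(q)=\max_\Gamma f$.

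The only mildly delicate step is justifying that each leaving direction at a boundary point carries a strictly negative integer slope. If $f$ were locally constant along some direction leaving $A_0$, points on that side would also lie in $A$. Near $x$ they would then belong to the same component $A_0$, so the direction would not leave $A_0$. This uses the finiteness of the linearity domains, which ensures $f$ is linear on a small initial segment of each tangent direction at $x$.
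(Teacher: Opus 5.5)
Your proof is correct and complete. Take a component $A_0$ of the maximum locus of $f$ (which misses $q$); every tangent direction leaving $A_0$ carries an integer slope at most $-1$, so $D_q+\operatorname{div}(f)\geq 0$ makes every boundary point of $A_0$ $D_q$-saturated, contradicting $q$-reducedness. The paper gives no argument of its own here and simply cites \cite[Lemma~7]{Amini}. Your argument is the standard proof of that fact, and it is the same maximum-locus saturation argument the paper runs in the proof of Lemma~\ref{lem:agreement-on-components-general}, so it just makes the lemma self-contained.
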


\begin{proof}
    This is \cite[Lemma~7]{Amini}.
\end{proof}

\begin{Lemma}\label{lem:agreement-on-components-general}
Let $s\in\Gamma$, let $U$ be a connected component of $\Gamma\setminus\{s\}$, and let $q,q'\in\Gamma\setminus U$. Then, $D_q|_U=D_{q'}|_U$.
\end{Lemma}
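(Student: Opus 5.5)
We prove the lemma by reducing it to a single comparison: we show that $D_q|_U=D_s|_U$ for every $q\in\Gamma\setminus U$. Applying this to both $q$ and $q'$ gives the statement. Fix such a $q$, and let $f\in\operatorname{Rat}(\Gamma)$ satisfy $D_s=D_q+\operatorname{div}(f)$. By Lemma~\ref{lem:reduced-maximum-locus}, $f$ attains its maximum at $q$. The plan is to replace $f$ by a function $\tilde f$ that is constant on $\overline U$ and agrees with $f$ elsewhere, up to a shift. We then check directly that $D_q+\operatorname{div}(\tilde f)$ is $s$-reduced. By uniqueness of reduced divisors it equals $D_s$, and since $\tilde f$ is constant on $U$ it also equals $D_q$ on $U$.

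First we record why $U$ behaves well. Because $U$ is a connected component of $\Gamma\setminus\{s\}$, its closure is $\overline U=U\cup\{s\}$, and $s$ is the only boundary point of $U$. Since $q\notin U$, the divisor $D_q$ is effective on $U$. Next we check that $D_q|_U$ is already ``reduced relative to $s$''. Let $A$ be a nonempty closed connected subset of $\Gamma\setminus\{q\}$ with $A\subseteq U$. Every boundary point of $A$ in $\Gamma$ lies in $U$. In particular $A$ avoids $q$, and by $q$-reducedness some boundary point of $A$ is not $D_q$-saturated. Applying the same reasoning to $D_s$: a closed connected $A\subseteq U$ also avoids $s$, so by $s$-reducedness some boundary point of $A$ is not $D_s$-saturated. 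So both $D_q|_U$ and $D_s|_U$ satisfy the Dhar-type condition for subsets of $U$.

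Now define $\tilde f$ to equal $f$ on $\Gamma\setminus U$ and the constant $f(s)$ on $U$; it is continuous and rational. Set $E=D_q+\operatorname{div}(\tilde f)$. On $U$ we have $E=D_q$, and on $\Gamma\setminus\overline U$ we have $E=D_s$. At $s$, the coefficient is
\[E(s)=D_s(s)-(\text{outgoing slopes of } f \text{ into } U).\]
We must show $E$ is $s$-reduced, in two steps.
\begin{enumerate}
\item \emph{Effectivity away from $s$.} This holds because $E$ agrees with either $D_q$ or $D_s$ at each point other than $s$.
\item \emph{Dhar condition.} Let $A$ be a closed connected subset of $\Gamma\setminus\{s\}$. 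Since $s\notin A$ and $A$ is connected, $A$ lies either in $U$ or in $\Gamma\setminus\overline U$. In the first case the non-saturated boundary point comes from $D_q$, as above. In the second case $A$ and its boundary lie where $E=D_s$, so the non-saturated point comes from $D_s$.
\end{enumerate}
Hence $E$ is $s$-reduced and linearly equivalent to $D$, so $E=D_s$. Therefore $D_s|_U=E|_U=D_q|_U$.

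The main obstacle I expect is the point-set bookkeeping. We need to confirm that a closed connected set avoiding $s$ cannot straddle $U$ and its complement. We also need the boundary of a subset of $U$ to be taken in $\Gamma$, with outdegrees counted correctly. Both follow from $s$ separating $U$ from the rest of $\Gamma$ and from $U$ being open in $\Gamma$. Notably, the argument never needs the sign of the slopes of $f$ into $U$, because $E(s)$ is unconstrained for $s$-reducedness.
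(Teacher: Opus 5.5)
Your argument is correct, but it takes a different route from the paper.

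The paper never passes through $D_s$. It takes $f$ with $\operatorname{div}(f)=D_{q'}-D_q$ and truncates it to the constant $f(s)$ off $U$. It then runs a maximum-principle argument: a component of the maximum locus lying inside $U$ at a value above $f(s)$ would have a boundary point that is not $D_q$-saturated, and $D_{q'}$ would be negative there. Applying the same argument to $-f$, it concludes that $f$ itself is constant on $U$.

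You instead compare both $D_q$ and $D_{q'}$ with $D_s$. You glue $D_q|_U$ to $D_s|_{\Gamma\setminus\overline U}$ as $E=D_q+\operatorname{div}(\tilde f)$. You then check effectivity and Dhar's condition separately on each side, using that a closed connected set missing $s$ lies in a single component of $\Gamma\setminus\{s\}$. Uniqueness of reduced divisors finishes the proof.

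Your route needs no slope or extremal analysis and isolates a clean locality principle for reduced divisors, at the price of relying on uniqueness. The paper's route avoids uniqueness and gives the stronger, function-level statement that the transition function is constant on $U$. That is the form used in Lemma~\ref{lem:transition-functions}(i). From your divisor-level conclusion you would need one extra, easy maximum-principle step to get zero slopes off $B_{uv}$: a function with zero divisor on a component $U$ of $\Gamma\setminus\{s\}$ is constant on $U$.

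One small correction. Your aside that $f$ attains its maximum at $q$ does not follow from Lemma~\ref{lem:reduced-maximum-locus}, because $D_q+\operatorname{div}(f)=D_s$ need not be effective at $s$. For example, on $\Gamma=[-1,1]$ with $s=0$, $q=-1$ and $D=-q$, one gets $D_s=-s$ and $f=\min(x,0)$, which is maximal on $[0,1]$ rather than at $q$. You never use this aside, so simply drop it.
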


\begin{proof}
Choose $f\in\operatorname{Rat}(\Gamma)$ such that $\operatorname{div}(f)=D_{q'}-D_q$. Define $g\in\operatorname{Rat}(\Gamma)$ by setting $g=f$ on $U$ and $g=f(s)$ on $\Gamma\setminus U$.

Suppose that $g>f(s)$ somewhere on $U$, and let $M$ be a connected component of the maximum locus of $g$ at a value greater than $f(s)$. Then $M\subseteq U$ and $q\notin M$. Since $D_q$ is $q$-reduced, some $u\in\partial M$ is not $D_q$-saturated. Every outgoing slope of $g$ from $M$ at $u$ is at most $-1$, so $D_q(u)+\operatorname{div}(g)(u)<0$. Since $f=g$ near $M$, this gives $D_{q'}(u)=D_q(u)+\operatorname{div}(f)(u)<0$, contrary to the effectiveness of $D_{q'}$ away from $q'$. Hence $f\leq f(s)$ on $U$.

Applying the same argument to $-f$, with $q$ and $q'$ interchanged, gives $f\geq f(s)$ on $U$. Thus $f$ is constant on $U$, and the result follows.
\end{proof}

\subsection{Harmonic Morphisms}
Let $\Gamma_1$ and $\Gamma_2$ be metric graphs with loopless models $(G_1,\ell_1)$ and $(G_2,\ell_2)$. A morphism of loopless models $\varphi:(G_1,\ell_1)\to(G_2,\ell_2)$ is a continuous map $\varphi:\Gamma_1\to\Gamma_2$ such that $\varphi(V(G_1))\subseteq V(G_2)$ and every edge $e=uv$ of $G_1$ satisfies exactly one of the following:
\begin{enumerate}[label=(\roman*)]
\item The edge $e$ is contracted to a vertex of $G_2$, in which case $\varphi(e)=\varphi(u)=\varphi(v)$. We then set $\mu_\varphi(e)=0$.

\item The edge $e$ maps linearly onto an edge $e'$ of $G_2$, with $e'=\varphi(u)\varphi(v)$, and $\mu_\varphi(e)=\frac{\ell_2(e')}{\ell_1(e)}\in\mathbb Z_{>0}$. The integer $\mu_\varphi(e)$ is called the expansion factor of $\varphi$ along $e$.
\end{enumerate}

A continuous map $\varphi:\Gamma_1\to\Gamma_2$ is a morphism of metric graphs if it is induced by a morphism of loopless models. A morphism of loopless models is harmonic at a vertex $v\in V(G_1)$ if the sum $\sum_{\substack{e\in E_v(G_1)\\ \varphi(e)=e'}}\mu_\varphi(e)$ is independent of $e'\in E_{\varphi(v)}(G_2)$. A morphism of metric graphs is harmonic if it is induced by a harmonic morphism of loopless models. 

If the target has at least one edge, this sum is denoted $m_\varphi(v)$ and called the local degree of $\varphi$ at $v$; the local degree at any point of the source is defined after subdivision. If $\Gamma_2=\{t\}$, then, following \cite[Remark~2.7]{ABBR1}, the data of $\varphi$ additionally includes an effective divisor $F_\varphi\in\operatorname{Div}(\Gamma_1)$, and we set $m_\varphi(x)=F_\varphi(x)$ for every $x\in\Gamma_1$. The degree of $\varphi$ is $\deg(\varphi)=\sum_{x \in \varphi^{-1}(x')}m_\varphi(x)$, where $x'$ is any point in $\Gamma_2$.

A harmonic morphism is finite if $m_\varphi(x) \geq 1$ for every $x \in \Gamma_1$. A harmonic morphism is non-degenerate if there exists a loopless vertex set $V\subseteq\Gamma_1$ such that $m_\varphi(v)\geq1$ for every $v\in V$. This is equivalent to the model-based definition of non-degeneracy in \cite{MZ1}.

The Riemann--Hurwitz divisor of a harmonic morphism $\varphi:\Gamma_1\to\Gamma_2$ is $R_\varphi= K_{\Gamma_1} - \varphi^*(K_{\Gamma_2})$. The morphism $\varphi$ is a tropical cover if $R_\varphi$ is effective.

A finite tropical cover is called a tropical admissible cover. Following \cite[Definition~2.5]{MaZ}, a well-contracted cover is a non-degenerate tropical cover whose target is a path. Every non-degenerate harmonic morphism to a path is a well-contracted cover, since the Riemann--Hurwitz condition is automatic for a path target.

For a harmonic morphism $\varphi: \Gamma_1 \to \Gamma_2$, define the pullback of a rational function $f \in \operatorname{Rat}(\Gamma_2)$ by $\varphi^*(f)=f \circ \varphi \in \operatorname{Rat}(\Gamma_1)$ and define the pullback of a divisor $D \in \operatorname{Div}(\Gamma_2)$ by $\varphi^*(D)=\sum_{y\in\Gamma_2}D(y)\sum_{x\in\varphi^{-1}(y)}m_\varphi(x)x$. By \cite[Proposition~2.7]{C}, $\operatorname{div}(\varphi^*(f))=\varphi^*(\operatorname{div}(f))$, so pulling back along a harmonic morphism preserves linear equivalence.

Following \cite[Definition~8]{MZ1}, a tropical modification of a harmonic morphism $\varphi:\Gamma_1\to\Gamma_2$ is a harmonic morphism $\widetilde{\varphi}:\widetilde{\Gamma}_1\to\widetilde{\Gamma}_2$ such that each $\widetilde{\Gamma}_i$ is a tropical modification of $\Gamma_i$ and the diagram
$$
\begin{CD}
\widetilde{\Gamma}_1 @>{\widetilde{\varphi}}>> \widetilde{\Gamma}_2 \\
@V{\rho_1}VV @VV{\rho_2}V \\
\Gamma_1 @>{\varphi}>> \Gamma_2
\end{CD}
$$ 
commutes. We additionally require $(\rho_1)_*(\widetilde{\varphi}^*(t)) \sim \varphi^*(\rho_2(t))$ for every $t\in\widetilde{\Gamma}_2$. When $\Gamma_2$ is not a point, this condition is automatic, while for a point target it is an additional requirement.

We make repeated use of the following extension of \cite[Proposition~2.4]{MZ1}.

\begin{Lemma}\label{lem:finite-tropical-modification}
Every non-degenerate harmonic morphism, respectively well-contracted cover, admits a finite harmonic morphism, respectively tropical admissible cover, as a tropical modification.
\end{Lemma}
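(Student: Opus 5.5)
The plan is to reduce to \cite[Proposition~2.4]{MZ1} and to check by hand the two features it does not cover: the case of a point target, and the preservation of the Riemann--Hurwitz condition for well-contracted covers. I would rebuild the construction locally so that it can be controlled in both situations.

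Let $\varphi:\Gamma_1\to\Gamma_2$ be non-degenerate. Fix a loopless vertex set $V$ with $m_\varphi\ge1$ on $V$, and refine both models so that $\varphi$ is induced by a harmonic morphism of loopless models $(G_1,\ell_1)\to(G_2,\ell_2)$ with $V\subseteq V(G_1)$. The only points with $m_\varphi=0$ are then interiors of contracted edges $e=uv$, with $\varphi(e)=w$ a vertex and $m_\varphi(u),m_\varphi(v)\ge1$. For each such edge I would do the following.

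\begin{enumerate}[label=(\arabic*)]
\item Attach to $\Gamma_2$ at $w$ a new leg $L_e$ of length $\ell_1(e)/2$.
\item Fold $e$ onto $L_e$ with expansion factor $1$ on each half, so that the midpoint of $e$ maps to the free end of $L_e$.
\item Restore harmonicity over $w$ in the direction of $L_e$. For every $x\in\varphi^{-1}(w)$, glue to $x$ as many copies of $L_e$ (mapped isometrically) as needed. This means $m_\varphi(x)$ copies if $x\notin\{u,v\}$, and $m_\varphi(x)-1$ copies at $x=u$ and at $x=v$; these numbers are nonnegative precisely because of non-degeneracy.
\end{enumerate}

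The glued pieces are trees meeting $\Gamma_1$ in one point, so the new source is a tropical modification of $\Gamma_1$. The new target is a tropical modification of $\Gamma_2$. Local degrees at old points are unchanged, the diagram with the retractions commutes, and after all contracted edges are treated the morphism is finite and harmonic of the same degree.

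The linear-equivalence requirement $(\rho_1)_*(\widetilde\varphi^*(t))\sim\varphi^*(\rho_2(t))$ needs to be checked only for $t\in L_e$. The glued copies of $L_e$ retract to their attachment points with the correct multiplicities. The folded edge contributes two points $p,p'\in e$ at equal distance from $u$ and $v$, respectively. I would show $p+p'\sim u+v$ using the rational function that has slope $1$ from $u$ to $p$, is constant between $p$ and $p'$, has slope $-1$ from $p'$ to $v$, and is constant off $e$.

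For a point target $\Gamma_2=\{t\}$ with divisor $F_\varphi$, non-degeneracy means $F_\varphi\ge1$ on a loopless vertex set. I would apply the same folding to every edge of $G_1$, attaching all legs at $t$ so that the target becomes a star. The role of $m_\varphi$ in step (3) is then played by $F_\varphi$, and the extra linear-equivalence condition reduces to the same computation.

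For a well-contracted cover the target $\Gamma_2$ is a path, and I expect preserving $R_{\widetilde\varphi}\ge0$ to be the main obstacle. Attaching $L_e$ at $w$ raises $\operatorname{val}(w)$ by one. This lowers $R$ by $m_\varphi(x)$ at every $x\in\varphi^{-1}(w)$, while the glued legs raise $\operatorname{val}(x)$ by only $m_\varphi(x)$, or by $m_\varphi(x)-1$ at $u,v$ (where the folded edge itself is not a new direction). So $u$ and $v$ may lose one unit of ramification. I would first try to avoid this by folding $e$ onto a new leg attached at an endpoint of the path, or by inserting a new target vertex inside an adjacent edge of the path, so that the target remains a path. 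The Riemann--Hurwitz condition is automatic for a path target, so this would suffice. If that is not possible, I would instead compute $R_{\widetilde\varphi}$ directly. The deficit at $u$ should be compensated by $\operatorname{val}(u)\ge2+m_\varphi(u)(\operatorname{val}(w)-2)+1$, which holds because $e$ is a contracted direction at $u$ not counted in $m_\varphi(u)$. At the leg endpoints, the fold points have valence $2$ over a leaf of the target, which contributes $+1$.
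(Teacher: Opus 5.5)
Your construction is the paper's own, taken from Melo--Zheng: fold each contracted edge $uv$ onto a new leg of length $\ell(uv)/2$ at $\varphi(uv)$, and glue copies of the leg at the fiber points, $m_\varphi(x)$ of them in general and $m_\varphi(x)-1$ at $u,v$. Your explicit check $p+p'\sim u+v$ is a fine substitute for the paper's remark that the compatibility condition is automatic for a non-point target. There is one omission: for a point target you must choose $V(G_1)\supseteq\operatorname{Supp}(F_\varphi)$, as the paper does. Otherwise a point $y$ in the interior of an edge with $F_\varphi(y)>0$ is folded into the interior of a leg. The fiber over the center is then only $F_\varphi|_{V(G_1)}$, and both the degree and the linear-equivalence condition fail.

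The genuine gap is in the well-contracted case. Your first idea does not work in general. Since $\rho_2\circ\widetilde\varphi=\varphi\circ\rho_1$, the edge $e$ must map into $\rho_2^{-1}(w)$, so its leg has to hang at $w$, and the target stops being a path whenever $w$ is an interior vertex of the path. In the fallback, the inequality $\operatorname{val}(u)\ge 2+m_\varphi(u)(\operatorname{val}(w)-2)+1$ does not follow from $e$ being an extra contracted direction. Your justification never uses that the target is a path, and without that hypothesis the inequality is false. For example, let $w$ be trivalent and let $u$ have one non-contracted edge of expansion factor $2$ over each direction at $w$, plus one contracted edge. Then $m_\varphi(u)=2$ and $R_\varphi(u)=0$, but the construction yields $R_{\widetilde\varphi}(u)=5-2-2\cdot2=-1$. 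Moreover, $u$ loses one unit for each contracted edge incident to it, so what you actually need is $R_\varphi(u)\ge c$, where $c$ is the number of such edges.

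The missing step is the paper's direct count. Let $a$ and $c$ be the numbers of non-contracted and contracted edges at a vertex $x$, let $k=\operatorname{val}_{\Gamma_2}(\varphi(x))$, and suppose $s$ contracted edges map to $\varphi(x)$. The construction adds $s\,m_\varphi(x)-c$ edges at $x$, so $\operatorname{val}_{\widetilde\Gamma_1}(x)=a+s\,m_\varphi(x)$ and $\operatorname{val}_{\widetilde\Gamma_2}(\varphi(x))=k+s$. Hence $R_{\widetilde\varphi}(x)=a-2-m_\varphi(x)(k-2)$, and the contracted edges drop out entirely. Nonnegativity then follows exactly from three facts: $k\le2$ (path target), $a\ge k$ (harmonicity with $m_\varphi(x)\ge1$), and $m_\varphi(x)\ge1$. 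The remaining points are routine. Midpoints of folded edges have coefficient $2$, not $1$, and all other new points have coefficient $0$.
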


\begin{proof}
Let $\varphi:\Gamma_1\to\Gamma_2$ be a non-degenerate harmonic morphism. By definition, there exists a loopless model $(G,\ell)$ of $\Gamma_1$ such that $m_\varphi(v)\geq1$ for every $v\in V(G)$ and $\varphi$ is a morphism of loopless models from $(G,\ell)$ to some model of $\Gamma_2$. If $\Gamma_2$ is a point, we further take $V(G)$ to contain $\operatorname{Supp}(F_\varphi)$.

First, if $\deg(\varphi)=1$, then $\varphi$ is already finite. Indeed, suppose some edge $uv$ of $G$ is contracted by $\varphi$. Then $\deg(\varphi)=\sum_{x\in\varphi^{-1}(\varphi(u))}m_\varphi(x)\geq m_\varphi(u)+m_\varphi(v)\geq2$, a contradiction.

Now suppose $\deg(\varphi)\geq2$. We apply the construction of \cite[Proposition~2.4]{MZ1}. For every contracted edge $uv$ of $G$, attach in the target an edge of length $\frac{\ell(uv)}{2}$ at $\varphi(uv)$. In the source, attach $m_\varphi(u)-1$ and $m_\varphi(v)-1$ edges of length $\frac{\ell(uv)}{2}$ at $u$ and $v$, respectively, and $m_\varphi(x)$ such edges at every $x\in\varphi^{-1}(\varphi(uv))\setminus\{u,v\}$. Map each added edge, together with each half of $uv$, identically onto the added target edge. This gives a finite harmonic morphism $\widetilde{\varphi}:\widetilde{\Gamma}_1\to\widetilde{\Gamma}_2$, which is a tropical modification of $\varphi$. In the point-target case, the additional condition follows since the fiber over the original target point pushes forward to $F_\varphi$, while all fibers of $\widetilde{\varphi}$ are linearly equivalent.

Suppose now that $\varphi$ is well-contracted. Let $x\in V(G)$ and let $a$ and $c$ be the numbers of non-contracted and contracted edges incident to $x$, respectively. If $k=\operatorname{val}_{\Gamma_2}(\varphi(x))$ and $s$ contracted edges of $G$ map to $\varphi(x)$, then $\operatorname{val}_{\widetilde{\Gamma}_2}(\widetilde{\varphi}(x))=k+s$. At $x$, the construction adds $sm_\varphi(x)-c$ edges, so $\operatorname{val}_{\widetilde{\Gamma}_1}(x)=a+sm_\varphi(x)$. Thus $R_{\widetilde{\varphi}}(x) =a-2-m_\varphi(x)(k-2)$. Since $\Gamma_2$ is a path, $k\leq2$, while harmonicity gives $a\geq k$ and non-degeneracy gives $m_\varphi(x)\geq1$. Hence this coefficient is nonnegative.

Every point of $\widetilde{\Gamma}_1\setminus\Gamma_1$ has coefficient $0$, while every point of $\Gamma_1\setminus V(G)$ has coefficient $0$ except the midpoint of each formerly contracted edge, where the coefficient is $2$. Thus $R_{\widetilde{\varphi}}$ is effective, so $\widetilde{\varphi}$ is a tropical admissible cover.
\end{proof}

\begin{Ex}
We give an example of a non-degenerate harmonic morphism and the finite tropical modification produced by the construction of \cite[Proposition~2.4]{MZ1}. In the following figure, the horizontal maps send colored source edges to target edges of the same color and contract black source edges. The vertical arrows are the retractions. Edge labels indicate expansion factors, and unlabeled non-contracted edges have expansion factor $1$. Edge lengths are chosen compatibly with the indicated morphisms.

\input{finitization-figure}
\end{Ex}

Lastly, let $\varphi:\Gamma_1\to T$ be a non-degenerate harmonic morphism to a metric tree. Since any two points of $T$ are linearly equivalent and pullback preserves linear equivalence, all fibers of $\varphi$ are linearly equivalent. Moreover, every fiber of $\varphi$ has rank at least $1$. Indeed, by Lemma~\ref{lem:finite-tropical-modification}, $\varphi$ admits a finite tropical modification $\widetilde{\varphi}:\widetilde{\Gamma}_1\to\widetilde T$. Every fiber of $\widetilde{\varphi}$ has rank at least $1$, since for every $x\in\widetilde{\Gamma}_1$ the divisor $\widetilde{\varphi}^*(\widetilde{\varphi}(x))-x$ is effective. By the definition of a tropical modification of a harmonic morphism, the retraction of each fiber of $\widetilde{\varphi}$ is linearly equivalent to the corresponding fiber of $\varphi$. Since retraction preserves rank, every fiber of $\varphi$ has rank at least $1$.
\subsection{Gonality}

A metric graph $\Gamma$ is $d$-gonal if there is a degree-$d$ non-degenerate harmonic morphism from a tropical modification of $\Gamma$ to a metric tree.

A metric graph $\Gamma$ is divisorially $d$-gonal if it has a degree-$d$ divisor of rank at least $1$.

Every $d$-gonal metric graph is divisorially $d$-gonal, since the retraction of any fiber of the exhibiting harmonic morphism has degree $d$ and rank at least $1$.

 Chan showed in \cite[Theorem~1.3]{C} that a metric graph is $2$-gonal if and only if it is divisorially $2$-gonal. We call a metric graph satisfying these equivalent properties hyperelliptic.

\section{Tree-Induced Divisors}\label{sec:tree-induced-divisors}

Let $D$ be a divisor on a metric graph $\Gamma$. We say that $D$ is tree-induced if there exist a tropical modification $\rho:\Gamma'\to\Gamma$, a non-degenerate harmonic morphism $\varphi:\Gamma'\to T$ to a metric tree, and $t\in T$ such that $\rho_*(\varphi^*(t))\sim D$. We say that $D$ is path-induced if $T$ may be chosen to be a metric path.

Let $X$ be a smooth proper curve over a non-Archimedean field $K$, let $\Gamma\subseteq X^{\operatorname{an}}$ be a skeleton, and let $\tau: X^{\operatorname{an}}\to\Gamma$ be the retraction. For a divisor $B=\sum_i n_iP_i$ on $X$, we call $\tau_*(B)=\sum_i n_i\tau(P_i)$ the tropicalization of $B$ on $\Gamma$.

The following proposition characterizes tree-induced divisors algebraically.

\begin{Prop}\label{prop:tree-induced-lift}
A divisor $D$ on $\Gamma$ is tree-induced if and only if there exist an algebraically closed complete non-Archimedean field $K$, a smooth proper connected curve $X$ over $K$ having $\Gamma$ as a skeleton, and a divisor $B$ on $X$ such that $\operatorname{rk}_X(B)\geq1$ and the tropicalization of $B$ is linearly equivalent to $D$.
\end{Prop}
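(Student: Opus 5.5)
Both directions of Proposition~\ref{prop:tree-induced-lift} go through finite harmonic morphisms to trees and the lifting/tropicalization dictionary of Amini--Baker--Brugall\'e--Rabinoff \cite{ABBR1,ABBR2}.

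\emph{Forward direction.} Suppose $D\sim\rho_*(\varphi^*(t))$ for a non-degenerate harmonic morphism $\varphi:\Gamma'\to T$ from a tropical modification $\rho:\Gamma'\to\Gamma$.
\begin{enumerate}[label=(\arabic*)]
\item By Lemma~\ref{lem:finite-tropical-modification}, replace $\varphi$ by a finite harmonic morphism $\widetilde\varphi:\widetilde\Gamma\to\widetilde T$. Here $\widetilde\Gamma$ is still a tropical modification of $\Gamma$, and by the defining condition the retraction of $\widetilde\varphi^*(\widetilde t)$ is linearly equivalent to $D$.
\item Make $\widetilde\varphi$ liftable. By \cite[Theorem~1.2 / Section~5]{ABBR2}, after a further tropical modification of source and target (adding infinite legs, or finite trees, to correct the local Riemann--Hurwitz obstruction), the map becomes a tropical cover satisfying the local lifting conditions. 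Over a tree target these conditions are always satisfiable, since the obstruction is the existence of genus-$0$ Hurwitz-type local covers. The lifting is then a finite morphism $f:X\to\mathbb P^1$ of smooth proper curves over an algebraically closed complete non-Archimedean $K$, with compatible skeleta $\Sigma\subseteq X^{\rm an}$ and $\Sigma'\subseteq(\mathbb P^1)^{\rm an}$, whose restriction is the modified $\widetilde\varphi$.
\item Since $\Sigma$ retracts onto $\Gamma$, the curve $X$ has $\Gamma$ as a skeleton. Choose a $K$-point $P$ of $\mathbb P^1$ tropicalizing to (a point retracting to) $\widetilde t$, and let $B=f^*(P)$. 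Then $\operatorname{rk}B\ge1$, and tropicalization commutes with pullback for finite morphisms, with multiplicities equal to the local degrees. Hence $\tau_*(B)$ is the retraction of a fiber of the modified map, which is linearly equivalent to $D$, since all fibers are equivalent and the retractions are compatible.
\end{enumerate}
The main obstacle is step (2): verifying that the local lifting conditions can always be met after modification with a tree target, including when expansion factors are high. I would quote the ABBR result that every tropical cover of a tree becomes liftable after suitable modification. Failing that, I would lift locally using Riemann existence for $\mathbb P^1$ minus points and glue by their formal gluing theorem.

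\emph{Reverse direction.} Let $X$, $\Gamma$, and $B$ be given with $\operatorname{rk}_X(B)\ge1$.
\begin{enumerate}[label=(\arabic*)]
\item Choose a pencil in $|B|$. Subtract its base locus $F$ to get a basepoint-free pencil, and hence a morphism $f:X\to\mathbb P^1$, so that $B=f^*(\infty)+F$ up to linear equivalence.
\item By \cite{ABBR1} there are skeleta $\Sigma\supseteq\Gamma$ of $X$ and a tree $T\subseteq(\mathbb P^1)^{\rm an}$ with $f^{-1}(T)=\Sigma$, such that $f|_\Sigma:\Sigma\to T$ is a finite harmonic morphism with local degrees equal to the algebraic ramification data. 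Then $\Sigma$ retracts to $\Gamma$, and $\Sigma$ is a tropical modification of $\Gamma$ after adding the tree parts (which may be enlarged to contain the specializations of points of $F$).
\item Tropicalization of $f^*(\infty)$ equals $(f|_\Sigma)^*(\tau(\infty))$.
\item Handle $F$. Since $\tau_*(F)$ is effective, I would use the paper's forthcoming statement that being tree-induced is preserved by adding effective divisors. If that must be avoided, modify directly: attach at each point of $\tau(F)$ a leg mapping isomorphically onto a new leg of $T$ (degree-$1$ additions via contracted components), keeping non-degeneracy.
\end{enumerate}
Since linear equivalence is preserved under retraction and tropicalization respects linear equivalence by Baker's specialization lemma, $\tau_*(B)\sim\rho_*(\varphi^*(t))$. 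Hence $D$ is tree-induced.
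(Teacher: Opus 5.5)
Your reverse direction is sound but takes a different route from the paper, which cites \cite[Theorems~8.6 and 9.1]{AG}. You use simultaneous semistable reduction, compatibility of retraction with pullback, and Lemma~\ref{lem:effective-extension} for the base locus. This is not circular, since that lemma does not use the proposition. The gap is in the forward direction, steps (2)--(3).

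The justification in (2) is wrong. If all residue curves are to have genus $0$, the local problems are genus-$0$ Hurwitz existence problems, and these can fail (degree $4$, profiles $(2,2),(2,2),(3,1)$). Indeed \cite{ABBR2} show that a $d$-gonal tropical curve need not lift to a $d$-gonal algebraic curve with the same skeleton. Your claim would also give genus-preserving lifts of every tree-induced divisor, whereas the paper obtains this only for path targets (Proposition~\ref{prop:path-induced-lift}), using the nonvanishing of double Hurwitz numbers. What \cite{ABBR2} does give, in residue characteristic $0$, is that every finite harmonic morphism of non-augmented metric graphs lifts once residue curves of positive genus are allowed.

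But then step (3) breaks. Every skeleton of $X^{\mathrm{an}}$ contains every type-II point of positive genus. So ``$\Sigma$ retracts onto $\Gamma$'' makes $\Gamma$ a skeleton of $X$ only if every vertex of $\widetilde\Gamma$ outside $\Gamma$ gets a genus-$0$ residue curve, and this is not automatic. Take a vertex of an attached tree with local degree $2$ over a trivalent target vertex, with one edge of expansion factor $2$ in each direction. It has $R_{\widetilde\varphi}=3-2-2=-1$, so its residue curve must have positive genus.

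To close the gap, normalize the attached trees before lifting. Apply Lemma~\ref{lem:finite-tropical-modification} first, so $\widetilde\varphi$ is finite. Let $U$ be an attached tree at $p\in\Gamma$, let $e$ be a component of $\widetilde T\setminus\{\widetilde\varphi(p)\}$, and let $b_e$ be the directional degree of $U$ at $p$ toward $e$. Harmonicity on $U$ shows that $U$ contributes exactly $c+b_e$ points, with multiplicity, over every $t\in e$, for some constant $c\geq0$. Replace $U$ by $b_e$ isometric copies of the closure of each $e$ together with $c$ isometric copies of $\widetilde T$, all attached at $p$, as in Lemma~\ref{lem:effective-extension}. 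This preserves harmonicity at $p$, finiteness, and the retracted fiber over every $t$ away from the images of attachment points. Every point outside $\Gamma$ now has local degree $1$, which forces residue curve $\mathbb P^1$ there.

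At vertices of $\Gamma$, solve the local Hurwitz problems by Riemann existence, adding extra simple branch points to fix the monodromy product and transitivity. This is always possible, and it puts all excess genus on $\Gamma$. The lifting theorem of \cite{ABBR1} then gives $X\to\mathbb P^1$ with $\Gamma$ as a skeleton, and your step (3) goes through. Also, tropical modifications here attach only finite trees, so ``infinite legs'' are not available. The paper sidesteps all of this by invoking \cite[Theorem~8.8]{AG}.
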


\begin{proof}
Set $d=\deg(D)$. By Lemma~\ref{lem:finite-tropical-modification} and \cite[Theorem~8.6]{AG}, the tree-induced divisors of degree $d$ are precisely, up to linear equivalence, the underlying divisors of effective combinatorial $g^1_d$'s. By \cite[Theorem~8.8]{AG}, every such combinatorial $g^1_d$ is induced by an algebraic divisor of rank at least $1$. Conversely, \cite[Theorem~9.1]{AG} associates an effective combinatorial $g^1_d$ to every algebraic divisor of rank at least $1$. The result follows.
\end{proof}

For path-induced divisors, the forward implication can be made genus-preserving.

\begin{Prop}\label{prop:path-induced-lift}
Let $D$ be a path-induced divisor on $\Gamma$. Then there exist an algebraically closed complete non-Archimedean field $K$, a smooth proper connected curve $X$ over $K$ having $\Gamma$ as a skeleton, and a divisor $B$ on $X$ such that $\operatorname{rk}_X(B)\geq1$, the tropicalization of $B$ is linearly equivalent to $D$, and $g(X)=g(\Gamma)$.
\end{Prop}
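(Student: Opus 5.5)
Starting from $D$ path-induced, fix a tropical modification $\rho:\Gamma'\to\Gamma$, a non-degenerate harmonic morphism $\varphi:\Gamma'\to P$ to a metric path, and $t\in P$ with $\rho_*(\varphi^*(t))\sim D$. The whole point is that a path target makes $\varphi$ a well-contracted cover. Then Lemma~\ref{lem:finite-tropical-modification} supplies a tropical admissible cover $\widetilde\varphi:\widetilde\Gamma\to\widetilde P$, a tropical modification of $\varphi$ whose target $\widetilde P$ is a tree obtained from $P$ by attaching legs. Since $\Gamma$ is a skeleton-level retract of $\widetilde\Gamma$, the retraction of any fiber of $\widetilde\varphi$ is still linearly equivalent to $D$, by the compatibility condition built into the definition of a tropical modification of a harmonic morphism. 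So it suffices to realize $\widetilde\varphi$ as the tropicalization of an honest finite morphism $X\to\mathbb P^1$ in which the minimal skeleton of $X$ is exactly $\Gamma$. Pulling back a point then gives $B$ with $\operatorname{rk}_X(B)\geq1$ and $\tau_*(B)\sim D$.

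For the lifting step I would use the lifting theorem for tropical admissible covers (Amini--Baker--Brugall\'e--Rabinoff, \cite{ABBR2}). It says that a tropical cover of a tree lifts once the local Hurwitz existence problems at each vertex are solvable with genus labels, with the source genus labels chosen so that the local Riemann--Hurwitz equations hold. I would assign genus $0$ to every point of $\widetilde\Gamma$. At each vertex $x$ of $\widetilde\Gamma$ one needs a degree-$m(x)$ map $\mathbb P^1\to\mathbb P^1$ with prescribed ramification profiles over the tangent directions at $\widetilde\varphi(x)$, plus extra simple branching to absorb $R_{\widetilde\varphi}(x)$. The local Riemann--Hurwitz equation with genus $0$ reads
\[
-2 = m(x)(-2)+\textstyle\sum(\text{ramification}),
\]
and this is exactly the effectivity of $R_{\widetilde\varphi}(x)$ computed in the proof of Lemma~\ref{lem:finite-tropical-modification}. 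The extra ramification is placed over added leaves or interior target points as simple branch points, which may require further modifying $\widetilde P$ by attaching legs.

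The key point making this genus-preserving is the local structure in the path case. From the Lemma's computation, in $\widetilde P$ each image vertex has at most two "old" directions plus new legs. Each new leg has all of its preimages unramified, since every added edge has expansion factor $1$, except over the halves of contracted edges. I therefore expect the local Hurwitz problem to reduce to at most two nontrivial partitions, namely the expansion factors of $\varphi$ along the two path directions, plus trivial or simply-ramified data. Hurwitz problems with at most two nontrivial branch profiles and genus $0$ always have solutions, for instance via explicit maps $z\mapsto z^{a}$ glued or via the known realizability of two-partition data with additional simple branching.

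The main obstacle is this local realizability check, carried out uniformly at every vertex, including the midpoints of formerly contracted edges, where the coefficient of $R_{\widetilde\varphi}$ is $2$. If the generic Hurwitz existence statement needs more care, I would instead add extra simple ramification legs so that every vertex carries only one nontrivial profile, which is trivially realizable. Once lifting holds, the lifted curve $X$ has skeleton $\widetilde\Gamma$, which retracts onto $\Gamma$, since the added trees carry no genus. All vertex genera are $0$, so $g(X)=b_1(\widetilde\Gamma)=g(\Gamma)$, completing the proof.
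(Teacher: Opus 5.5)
Your proposal is correct and follows essentially the same route as the paper: you finitize the well-contracted cover via Lemma~\ref{lem:finite-tropical-modification} and observe that the new legs carry only trivial profiles, so each local Hurwitz problem has at most two nontrivial profiles. You then use positivity of genus-$0$ double Hurwitz numbers to lift via Amini--Baker--Brugall\'e--Rabinoff with all vertex curves rational, and read off $g(X)=g(\Gamma)$.

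Two points to tidy. You should state the choice of $K$ explicitly (residue characteristic $0$, value group containing all edge lengths). Your fallback of attaching extra legs should be dropped: it is not needed, and it could not remove the two path-direction profiles anyway.
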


\begin{proof}
Choose a well-contracted cover $\varphi:\Gamma'\to P$ exhibiting the path-inducibility of $D$. By Lemma~\ref{lem:finite-tropical-modification}, it admits a tropical admissible cover $\widetilde\varphi:\widetilde\Gamma\to\widetilde P$ as a tropical modification. Choose an algebraically closed complete non-Archimedean field $K$ of residue characteristic $0$ whose value group contains all edge lengths of $\widetilde\Gamma$ and $\widetilde P$. Since $P$ is a path and the finitization adds only trivial ramification profiles, each local Hurwitz problem has at most two nontrivial profiles. Double Hurwitz numbers are nonzero, so \cite[Corollary~3.8]{ABBR2} gives a lift in which all source and target vertex curves are rational. Since tropical modification preserves genus, the source curve $X$ has genus $g(\Gamma)$, while the lifted target has genus $0$. A fiber of the lifted morphism then gives the required divisor $B$.
\end{proof}

We also record that tree-inducibility and path-inducibility are preserved under addition of effective divisors.

\begin{Lemma}\label{lem:effective-extension}
Let $A$ be a tree-induced, respectively path-induced, divisor on $\Gamma$, and let $E$ be an effective divisor on $\Gamma$. Then $A+E$ is tree-induced, respectively path-induced.
\end{Lemma}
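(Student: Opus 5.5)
By induction on $\deg(E)$, it suffices to treat $E=p$ for a single point $p\in\Gamma$. The idea is to realize $A+p$ as a fiber of a new harmonic morphism. We take the morphism exhibiting $A$ and graft onto it a degree-one sheet, namely a copy of (part of) the target tree attached at $p$.

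Fix a tropical modification $\rho:\Gamma'\to\Gamma$, a non-degenerate harmonic morphism $\varphi:\Gamma'\to T$ and $t\in T$ with $\rho_*(\varphi^*(t))\sim A$. By Lemma~\ref{lem:finite-tropical-modification} we may replace $\varphi$ by a finite harmonic morphism, or by a tropical admissible cover in the path case. Its fibers still retract to divisors equivalent to $A$, and the composite retraction is still a tropical modification of $\Gamma$. So we may assume $\varphi$ is finite.

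The construction depends on where $p$ lands. Let $p$ also denote the point of $\Gamma\subseteq\Gamma'$ and set $s=\varphi(p)$.

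Suppose first that $T$ is not a point. Form $T''$ by attaching to $T$ at $s$ a new leaf edge $e_0$ of some length $L$, with new endpoint $s_0$. Over each original edge direction at $s$ and over $e_0$, we must keep the local sums equal. Over $e_0$ we attach edges at the points of $\varphi^{-1}(s)$: at each $x\in\varphi^{-1}(s)$ we attach $m_\varphi(x)$ edges of length $L$, each mapping isometrically onto $e_0$. This is the standard finitization move, and it keeps the degree equal to $\deg\varphi$.

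Next we add a copy $T_p$ of the whole tree $T''$, glued to $\Gamma'$ by identifying its copy of $s_0$ with $p$. Equivalently, we attach at $p$ a tree whose edges map isometrically onto $T''$, so that near $p$ it maps with slope $1$ onto $e_0$. Now the point $p$ maps to $s_0$, and $p$ has local degree $m_\varphi(p)+1$ on the $e_0$ side. To repair harmonicity we let $p$ itself be the point sent to $s_0$. Concretely, we choose the retraction setup so that the attached copy $T_p\cong T''$ with $p\mapsto s_0$ is a degree-one sheet. The source $\Gamma''=\Gamma'\cup(\text{new trees})$ is a tropical modification of $\Gamma$, since all additions are trees attached at single points. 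The map $\varphi'':\Gamma''\to T''$ has degree $\deg\varphi+1$ and is harmonic away from $p$ by construction. The fiber over $s_0$ is the previous fiber data together with the point $p$, so it retracts to a divisor equivalent to $\rho_*\varphi^*(t)+p\sim A+p$.

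The step I expect to be the main obstacle is making this gluing harmonic at $p$. The point $p$ lies both on an old sheet of $\varphi$ and on the new sheet, so it is cleaner to glue the new sheet along a new edge rather than at the same point. I would do this as follows:
\begin{itemize}
\item attach at $p$ a new edge $\epsilon$ of length $L$;
\item extend $T$ by a leaf $e_0$ of length $L$ at a point $s'$, which need not be $\varphi(p)$;
\item map $\epsilon$ isometrically onto $e_0$ with its far end over $s'$, and glue a full copy of $T$ at that far end, mapping identically.
\end{itemize}
The new sheet is then an isomorphic copy of $T\cup e_0$ meeting $\Gamma'$ only at $p$, which maps to the free end of $e_0$. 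Harmonicity at $p$ still requires a local count over the other directions. To handle this we take $p$ to be a vertex at which $e_0$ is the only target direction relevant to the new sheet, and we balance the old sheets over $e_0$ by attaching $m_\varphi$-many copies of $e_0$ over $s'$, exactly as in the finitization step.

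Non-degeneracy is clear, since all local degrees are at least $1$. In the path case $T\cup e_0$ is a tree but need not be a path. To stay within paths, choose $s'$ to be an endpoint of $P$ and extend the path there, so the target remains a path. If $T$ is a point, first pass to the modification of Lemma~\ref{lem:finite-tropical-modification}, whose target is a genuine tree with an edge, and apply the same argument.
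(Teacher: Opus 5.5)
\textbf{There is a genuine gap at the gluing point $p$.} In both of your constructions, the new sheet is attached at $p$ through a point lying over a new point of the target. In the first attempt this is $s_0$; in the second it is the free end of $e_0$. But $p$ already lies in $\Gamma'$, and the new map must restrict to $\varphi$ there, so $p$ must be sent to $\varphi(p)\in T$. Neither $s_0$ nor the free end of $e_0$ lies in $T$, so your map is not well defined at $p$. The sentence ``we let $p$ itself be the point sent to $s_0$'' cannot be carried out without destroying $\varphi$. Your claim that the fiber over $s_0$ is the old fiber plus $p$ rests on this inconsistency.

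The missing idea is to glue the new degree-one sheet at the point corresponding to $\varphi(p)$. Attach a copy of $T$ at $p$, identifying $p$ with the copy of $\varphi(p)$, and map it isometrically onto $T$. Then every tangent direction of $T$ at $\varphi(p)$ gains exactly one new preimage direction at $p$, with expansion factor $1$. So all directional sums at $p$ increase by $1$, and harmonicity at $p$ is automatic; the fact that $p$ also lies on old sheets does not matter. For each $t\in T$, the copy contains exactly one point over $t$, and that point retracts to $p$, so $\rho_*$ of every fiber gains exactly $p$. Your first construction becomes correct if you glue $T''$ at its copy of $s$ instead of $s_0$. Once you do this, the leaf $e_0$ and the finitization are unnecessary. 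This is the paper's proof; when $T$ is a point, it simply replaces $F_\varphi$ by $F_\varphi+E$.

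\textbf{The finitization reduction also breaks the path case.} The tropical admissible cover from Lemma~\ref{lem:finite-tropical-modification} has target $\widetilde P$. This is a tropical modification of $P$ with new leaves attached at the images of contracted edges, and in general it is not a path. Likewise, finitizing a morphism to a point produces a star. After that reduction there is no path whose endpoint you can extend, so your argument would at best show that $A+E$ is tree-induced. Keeping the original $\varphi$ and attaching copies of the unchanged target avoids this, because the target is never modified.
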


\begin{proof}
Let $\varphi:\Gamma'\to T$ exhibit $A$ as tree-induced, with $T$ a path in the path-induced case, and let $\rho:\Gamma'\to\Gamma$ be the retraction. If $T$ is a point, replace $F_\varphi$ by $F_\varphi+E$. Otherwise, for each $p\in\operatorname{supp}(E)$, attach $E(p)$ copies of $T$ to $\Gamma'$ at $p$, identifying $p$ with the point corresponding to $\varphi(p)$, and map each copy isometrically to $T$. The resulting morphism $\widetilde\varphi:\widetilde\Gamma\to T$ is non-degenerate and harmonic. If $\widetilde\rho:\widetilde\Gamma\to\Gamma$ is the retraction, then $\widetilde\rho_*(\widetilde\varphi^*(t))=\rho_*(\varphi^*(t))+E\sim A+E$ for every $t\in T$. Since the target is unchanged, the construction preserves path-inducibility.
\end{proof}

The following elementary bounds show that tree-inducibility already holds in several favorable situations.

\begin{Prop}\label{prop:gonality-tree-induced-bounds}
Let $\Gamma$ be a $k$-gonal metric graph of genus $g$, and let $D$ be a positive-rank divisor on $\Gamma$. Then $D$ is tree-induced if any of the following holds:
\begin{enumerate}[label=(\roman*)]
\item $D$ is nonspecial;
\item $\operatorname{rk}(D)\geq k -1$;
\item $\deg(D)\geq g+k-2$.
\end{enumerate}
\end{Prop}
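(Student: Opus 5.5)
The strategy is to write $D$, up to linear equivalence, as $A+E$ with $A$ a fiber (retracted) of a gonality-exhibiting morphism and $E$ effective, and then apply Lemma~\ref{lem:effective-extension}. Fix a degree-$k$ non-degenerate harmonic morphism $\varphi:\Gamma'\to T$ from a tropical modification of $\Gamma$ to a tree. Let $A=\rho_*(\varphi^*(t))$ for some $t\in T$, so that $A$ is tree-induced of degree $k$. In each case it then suffices to show $|D-A|\neq\varnothing$.

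\emph{Case (iii).} Here $\deg(D-A)\geq g-2$, which is one short of the Riemann--Roch threshold, so we need a little extra room. Note that $\operatorname{rk}(A)\geq1$ and $A$ is effective. By Riemann--Roch,
\[\operatorname{rk}(D-A)-\operatorname{rk}(K_\Gamma-D+A)=\deg(D)-k+1-g\geq -1.\]
If $\operatorname{rk}(K_\Gamma-D+A)\geq0$, we argue using that $A$ moves. Since $\operatorname{rk}(A)\geq1$, we can choose a representative $A'\sim A$ with $A'\geq x$ for any prescribed $x$. If $|D-A|=\varnothing$, then $\operatorname{rk}(K-D+A)=\deg(D)\ldots$. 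This route is getting tangled. A cleaner approach is to reduce (iii) to (i): if $D$ is special, Clifford's theorem for metric graphs, $2\operatorname{rk}(D)\leq\deg(D)$, combined with an analysis of $K-D$, should handle it. Concretely, if $D$ is special then $\operatorname{rk}(K-D)\geq0$ with $\deg(K-D)\leq g-k$. Applying Riemann--Roch to $D-A$ gives
\[\operatorname{rk}(D-A)=\deg(D)-k+1-g+\operatorname{rk}(K-D+A)\geq -1+\operatorname{rk}(K-D+A).\]
Since $K-D$ is effective up to equivalence, $K-D+A\sim$ effective${}+A$, which has rank $\geq\operatorname{rk}(A)\geq1$ (adding an effective divisor does not lower rank). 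Hence $\operatorname{rk}(D-A)\geq0$. If instead $D$ is nonspecial, case (i) applies.

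\emph{Case (i).} If $D$ is nonspecial, then $\operatorname{rk}(D)=\deg(D)-g$ by Riemann--Roch, and $\operatorname{rk}(D)\geq1$ gives $\deg D\geq g+1$. If $\deg(D)-k\geq g$, then $D-A$ has nonnegative rank by Riemann--Roch and we are done. Otherwise, apply the same trick as in (iii):
\[\operatorname{rk}(D-A)\geq\deg(D)-k+1-g+\operatorname{rk}(K-D+A).\]
This needs a bound on $\operatorname{rk}(K-D+A)$, and $K-D$ is not effective. Instead I would use the rank-determining approach: $\operatorname{rk}(D)\geq1$ alone is not enough, so in this case I would compare with the fiber through a point $p$ chosen so that $D-A$ has degree $\geq g$ when possible. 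The main obstacle is the range $g+1\leq\deg D<g+k$. There I plan to use that nonspecial means $|K-D|=\varnothing$, and to show $\operatorname{rk}(K-D+A)\geq k-1-(\deg D-g)$. This would follow because $\operatorname{rk}(E+A)\geq\operatorname{rk}(E)+\ldots$ fails in general, so instead I will choose $A$ supported by moving points into the fixed part. I expect this step to require the most care.

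\emph{Case (ii).} If $\operatorname{rk}(D)\geq k-1$, take the fiber $\varphi^*(t)$ and note that $A$ has degree $k$. Since $\operatorname{rk}(D)\geq k-1$, for any effective $E'$ of degree $k-1$ we have $|D-E'|\neq\varnothing$. Choose $A'\sim A$ containing a prescribed point $x$ (possible since $\operatorname{rk}A\geq1$), and write $A'=x+E'$ with $E'$ of degree $k-1$ supported near the fiber. Then $D-E'\sim$ effective $F$. The task is to ensure $F$ can be chosen containing $x$. Varying $t$ over $T$ and using that the family of fibers is one-dimensional, I expect a semicontinuity/limit argument to produce $t$ with $D-\rho_*\varphi^*(t)$ linearly equivalent to an effective divisor. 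Then Lemma~\ref{lem:effective-extension} finishes.
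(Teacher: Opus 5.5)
Your argument for (iii) when $D$ is special is correct and matches the paper's: Riemann--Roch for $D-A$, together with $\operatorname{rk}(K_\Gamma-D+A)\geq\operatorname{rk}(A)\geq1$, gives $\operatorname{rk}(D-A)\geq0$.

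Everything else rests on the claim that in each case one can find $A$ with $|D-A|\neq\varnothing$. That claim is false for nonspecial divisors, so the gaps in (i), in (ii), and in the nonspecial subcase of (iii) (which you defer to (i)) cannot be closed along your route. Here is a counterexample.

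\begin{itemize}
\item Let $\Gamma$ have genus $2$ and take $k=2$.
\item By Riemann--Roch, every degree-$2$ divisor of rank $1$ is equivalent to $K_\Gamma$. Hence every retracted fiber $A$ of a degree-$2$ morphism satisfies $A\sim K_\Gamma$.
\item Every $D$ of degree $3$ is nonspecial of rank $1$, so it satisfies (i), (ii) and (iii) simultaneously.
\item However, $D-K_\Gamma$ has degree $1$. The effective degree-$1$ classes form the one-dimensional image of $\Gamma$ in the two-dimensional torus $\operatorname{Pic}^1(\Gamma)$. So $|D-A|=\varnothing$ for general such $D$, whatever morphism you choose.
\end{itemize}

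The ``vary $t\in T$'' idea in (ii) is also vacuous: all fibers $\rho_*(\varphi^*(t))$ of a fixed morphism are linearly equivalent, so varying $t$ never changes whether $|D-\rho_*(\varphi^*(t))|$ is empty.

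The missing idea for nonspecial $D$ is to leave the graph.
\begin{enumerate}
\item Realize $\Gamma$ as a skeleton of a smooth proper curve $X$ of genus $g$, by putting $\mathbb P^1$ at each vertex of $G_{-}(\Gamma)$.
\item Lift an effective representative of $D$ pointwise to a divisor $B$ on $X$.
\item The specialization inequality gives $\operatorname{rk}_X(B)\leq\operatorname{rk}(D)=\deg(D)-g$. Algebraic Riemann--Roch gives $\operatorname{rk}_X(B)\geq\deg(D)-g$.
\item Hence $\operatorname{rk}_X(B)=\operatorname{rk}(D)\geq1$, and Proposition~\ref{prop:tree-induced-lift} shows that $D$ is tree-induced.
\end{enumerate}

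With (i) settled, you may assume $D$ is special in (ii). There the right tool is full superadditivity, not merely effectivity of $K_\Gamma-D$:
\[
\operatorname{rk}(K_\Gamma-D+A)\geq\operatorname{rk}(K_\Gamma-D)+\operatorname{rk}(A)\geq\operatorname{rk}(D)-\deg(D)+g.
\]
Substituting this into Riemann--Roch for $D-A$ gives $\operatorname{rk}(D-A)\geq\operatorname{rk}(D)-k+1\geq0$. Lemma~\ref{lem:effective-extension} then finishes as you intended.
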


\begin{proof}
Suppose first that $D$ is nonspecial. Choose an algebraically closed complete non-Archimedean field $K$ with value group $\mathbb R$ and residue field $\kappa$ of characteristic $0$, and assign $\mathbb P^1_\kappa$ to every vertex of $G_{-}(\Gamma)$. By \cite[Theorem~3.24]{ABBR1}, the resulting metrized complex is the skeleton of a smooth proper connected curve $X/K$ of genus $g$. Lift an effective divisor equivalent to $D$ pointwise to a divisor $B$ on $X$. Tropical Riemann--Roch gives $\operatorname{rk}(D)=\deg(D)-g$. The specialization lemma \cite[Theorem~5.11]{AB}, together with the fact that metrized-complex rank is at most the rank of the underlying metric-graph divisor, gives $\operatorname{rk}_X(B)\leq\operatorname{rk}(D)$, while algebraic Riemann--Roch gives $\operatorname{rk}_X(B)\geq\deg(D)-g$. Hence $\operatorname{rk}_X(B)=\operatorname{rk}(D)\geq1$. Proposition~\ref{prop:tree-induced-lift} therefore shows that $D$ is tree-induced.

Now suppose that $D$ is special. Since $\Gamma$ is $k$-gonal, there is a tree-induced divisor $A$ of degree $k$ and rank $a\geq1$. If $|D-A|\neq\varnothing$, then $D\sim A+E$ for some effective divisor $E$, and Lemma~\ref{lem:effective-extension} applies. Otherwise, $\operatorname{rk}(D-A)=-1$, so Riemann--Roch and superadditivity of rank give $g+ k -\deg(D)-2=\operatorname{rk}(K_\Gamma-D+A)\geq\operatorname{rk}(K_\Gamma-D)+a=\operatorname{rk}(D)-\deg(D)-1+g+a$. Hence $\operatorname{rk}(D)\leq k -a-1\leq k -2$, while $\operatorname{rk}(K_\Gamma-D+A)\geq a$ gives $\deg(D)\leq g+k-a-2\leq g+k-3$. Thus either (ii) or (iii) forces $|D-A|\neq\varnothing$, and the result follows.
\end{proof}

\begin{Cor}\label{cor:hyperelliptic-tree-induced}
On hyperelliptic metric graphs, every positive-rank divisor is tree-induced.
\end{Cor}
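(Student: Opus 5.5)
The plan is to deduce this corollary directly from Proposition~\ref{prop:gonality-tree-induced-bounds} with $k=2$. A hyperelliptic metric graph $\Gamma$ is by definition $2$-gonal: Chan's theorem \cite[Theorem~1.3]{C} identifies divisorial $2$-gonality with the existence of a degree-$2$ non-degenerate harmonic morphism from a tropical modification to a tree. So we may apply the proposition with $k=2$. Let $D$ be a divisor of rank at least $1$. Condition (ii) of Proposition~\ref{prop:gonality-tree-induced-bounds} asks for $\operatorname{rk}(D)\geq k-1=1$, which holds by assumption, and therefore $D$ is tree-induced.

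To make sure nothing is hidden in the proposition's proof for this case, I will trace it. If $D$ is nonspecial, case (i) applies directly. If $D$ is special, take a tree-induced divisor $A$ of degree $2$ and rank $a\geq1$, namely the retraction of a fiber of the degree-$2$ harmonic morphism. The argument shows that if $|D-A|=\varnothing$, then $\operatorname{rk}(D)\leq k-a-1\leq 0$. This contradicts $\operatorname{rk}(D)\geq1$. Hence $D\sim A+E$ with $E$ effective, and Lemma~\ref{lem:effective-extension} shows that $D$ is tree-induced.

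The only point requiring care is the identification of ``hyperelliptic'' with $2$-gonal in the geometric sense, since the proposition assumes $\Gamma$ is $k$-gonal rather than divisorially $k$-gonal. This is exactly the content of Chan's equivalence as recalled in the gonality subsection, so I expect no real obstacle there.

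I will also treat the degenerate case where $\Gamma$ has genus $0$ or $1$, if such graphs are counted as hyperelliptic. There, every divisor of degree at least $2$ is nonspecial or the above argument applies verbatim. Degree-$1$ positive-rank divisors occur only on trees, where the identity map to $\Gamma$ itself exhibits the divisor as tree-induced.
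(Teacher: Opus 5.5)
Your proposal is correct and matches the paper's route: the corollary is the case $k=2$ of Proposition~\ref{prop:gonality-tree-induced-bounds}, where condition (ii) becomes $\operatorname{rk}(D)\geq 1$, and hyperelliptic means $2$-gonal by Chan's equivalence. Your separate treatment of genus $0$ and $1$ is unnecessary, since the proposition has no genus hypothesis and every positive-rank divisor there is nonspecial anyway.
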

\section{Metric Graphs of Pseudocompact Type}\label{sec:pseudocompact-type}

The simplification $G^{\mathrm{simp}}$ of a loopless graph $G$ is obtained by replacing each nonempty family of parallel edges by a single edge. A multitree is a connected loopless graph whose simplification is a tree, and a multipath is a connected loopless graph whose simplification is a path. A metric graph $\Gamma$ is of pseudocompact type if $G_{-}(\Gamma)$ is a multitree. The terminology is borrowed from Osserman's notion of a nodal curve of pseudocompact type, whose dual graph satisfies the same condition \cite{Oss}. If $G_{-}(\Gamma)$ is a multipath, then we call $\Gamma$ a metric banana path.

Throughout this section, we fix a metric graph of pseudocompact type $\Gamma$ and a positive-rank divisor $D\in\operatorname{Div}(\Gamma)$. Additionally, let $T=G_{-}(\Gamma)^{\mathrm{simp}}$. We identify $V(T)$ with $V_{-}(\Gamma)$. For each edge $uv\in E(T)$, let $B_{uv}\subseteq\Gamma$ be the closed subgraph consisting of the edges of $G_{-}(\Gamma)$ with endpoints $u$ and $v$. 

\begin{Lemma}\label{lem:transition-functions}
Let $u,v\in V_{-}(\Gamma)$ be adjacent, and let
$f_{uv}\in\operatorname{Rat}(\Gamma)$ satisfy
$\operatorname{div}(f_{uv})=D_u-D_v$. Then:
\begin{enumerate}[label=(\roman*)]
\item Every slope of $f_{uv}$ along an edge not contained in $B_{uv}$ is zero.
\item The function $f_{uv}$ attains its minimum at $u$ and its maximum at $v$.
\end{enumerate}
\end{Lemma}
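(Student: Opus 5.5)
Both parts will come from earlier lemmas applied to the two reduced divisors $D_u$ and $D_v$. We use only that they are linearly equivalent, effective, and reduced at $u$ and $v$ respectively. Part (ii) is a direct application of Lemma~\ref{lem:reduced-maximum-locus}. Part (i) is a localization argument using the tree structure of $T$ together with the argument in the proof of Lemma~\ref{lem:agreement-on-components-general}.

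For (ii), note that $D_v+\operatorname{div}(f_{uv})=D_u\geq0$ and that $D_v$ is $v$-reduced. Lemma~\ref{lem:reduced-maximum-locus} then says that $f_{uv}$ attains its maximum at $v$. Symmetrically, $D_u+\operatorname{div}(-f_{uv})=D_v\geq0$ with $D_u$ being $u$-reduced. So $-f_{uv}$ attains its maximum at $u$, which means $f_{uv}$ attains its minimum at $u$.

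For (i), let $e$ be an edge of $G_{-}(\Gamma)$ not contained in $B_{uv}$. Then $e\subseteq B_{xy}$ for some edge $xy\neq uv$ of the tree $T$. Deleting the edge $uv$ from $T$ leaves two components, and $xy$ lies in one of them; relabeling $u$ and $v$ if necessary, it lies on the $u$-side. Label $x,y$ so that $x$ is closer to $u$ in $T$; possibly $x=u$. We take $s=x$ and let $U$ be the connected component of $\Gamma\setminus\{x\}$ containing the interior of $B_{xy}$.

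The key combinatorial claim is that $u,v\notin U$. To see this, observe that the subgraphs $B_{ab}$ meet one another only at vertices of $V_{-}(\Gamma)$, and their incidence pattern is that of the tree $T$. Hence $U$ is the union of the $B_{ab}$ for edges $ab$ in the branch of $T$ at $x$ through $y$, with $x$ removed. Since $T$ is a tree and $x$ lies on the $T$-path from $y$ to $u$, and hence also to $v$, neither $u$ nor $v$ lies in that branch. This holds also when $x=u$, in which case $u=s\notin U$. The loop midpoints added to form $V_{-}(\Gamma)$ are leaves of $T$ and cause no trouble.

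Now apply the argument in the proof of Lemma~\ref{lem:agreement-on-components-general}, with $q=v$, $q'=u$ and $f=f_{uv}$; then $\operatorname{div}(f)=D_{q'}-D_q$ as required there. That proof shows not just $D_u|_U=D_v|_U$ but that $f_{uv}$ is constant on $U$, equal to $f_{uv}(x)$. Consequently every slope of $f_{uv}$ along $e\subseteq\overline U$ is zero.

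The only step needing care is the claim that the component $U$ avoids both $u$ and $v$. This rests on $x$ being a cut point of $\Gamma$ separating the branch of $T$ containing $y$ from everything else, which is exactly where the pseudocompact-type hypothesis is used.
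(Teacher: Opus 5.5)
Your proof is correct and follows the paper's argument. Part (ii) is exactly the paper's double application of Lemma~\ref{lem:reduced-maximum-locus}, and part (i) uses Lemma~\ref{lem:agreement-on-components-general}, through the conclusion of its proof that $f$ is constant on $U$, just as the paper does; the only cosmetic difference is that the paper cuts at $s=u$ or $s=v$, taking the components of $\Gamma\setminus\{u\}$ not containing $v$ and vice versa, whereas you cut at the endpoint $x$ of $B_{xy}$ nearer to $uv$ (either choice keeps $u,v$ outside $U$).
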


\begin{proof}
    Lemma~\ref{lem:agreement-on-components-general}, applied at $u$ and $v$, gives (i). Applying Lemma~\ref{lem:reduced-maximum-locus} to $D_v+\operatorname{div}(f_{uv})=D_u$ and $D_u+\operatorname{div}(-f_{uv})=D_v$ gives (ii).
\end{proof}

We now prove Theorem~\ref{thm:pseudocompact-tree-induced}.

\begin{proof}
Choose a root $r\in V_{-}(\Gamma)$ and orient $T$ away from $r$. For each oriented edge $u\to v$ of $T$, choose $f_{uv}\in\operatorname{Rat}(\Gamma)$ as in Lemma~\ref{lem:transition-functions} and define $f=\sum_{u\to v}f_{uv}$, where the sum is over the edges of $T$ with the orientation away from $r$.

Fix an oriented edge $u\to v$. By Lemma~\ref{lem:transition-functions}(i), every summand other than $f_{uv}$ has zero slope on $B_{uv}$. Hence $f|_{B_{uv}}$ and $f_{uv}|_{B_{uv}}$ differ by a constant, and Lemma~\ref{lem:transition-functions}(ii) gives $f(u)\leq f(p)\leq f(v)$ for every $p\in B_{uv}$.

Suppose first that $f$ is constant. Then every $f_{uv}$ is constant on $B_{uv}$ and has zero slope outside $B_{uv}$, hence is constant on $\Gamma$. Thus $D_u=D_v$ for every edge $uv$ of $T$, so $D_v=D_r$ for every $v\in V_{-}(\Gamma)$. By Lemma~\ref{lem:rank-determining-criterion}, $D_r(v)=D_v(v)\geq1$ for every $v\in V_{-}(\Gamma)$. Thus $D_r$ is effective. The constant harmonic morphism $\varphi:\Gamma\to\{t\}$ with $F_\varphi=D_r$ is non-degenerate and satisfies $\varphi^*(t)=D_r\sim D$, so $D$ is tree-induced.

Assume now that $f$ is nonconstant. Contract every edge $uv$ of $T$ for which $f(u)=f(v)$, and let $\pi:T\to T'$ be the quotient map. If $uv$ is contracted, then the preceding inequalities show that $f$ is constant on $B_{uv}$. Since $f|_{B_{uv}}$ and $f_{uv}|_{B_{uv}}$ differ by a constant, $f_{uv}$ is constant on $B_{uv}$ and hence on $\Gamma$, so $D_u=D_v$. Therefore, for $x\in V(T')$, the definitions $\bar f(x)=f(u)\quad\text{and}\quad D_x=D_u$ for $u\in\pi^{-1}(x)$ are independent of the choice of $u$. Moreover, $D_x$ is effective, since $D_u$ is effective away from $u$ and Lemma~\ref{lem:rank-determining-criterion} gives $D_u(u)\geq1$.

Orient $T'$ away from $\pi(r)$. Whenever $x\to y$ is an oriented edge of $T'$, write $u\to v$ for its unique noncontracted preimage edge in $T$, so that $\pi(u)=x$ and $\pi(v)=y$. Give $xy$ length $\ell(xy)=\bar f(y)-\bar f(x)>0$, and let $\Omega=(T',\ell)$. Define $\varphi_0:\Gamma\to\Omega$ by $\varphi_0(u)=\pi(u)$ for $u\in V(T)$. On each $B_{uv}$ with $f(u)\neq f(v)$, map $p$ to the point of $xy$ at distance $f(p)-f(u)$ from $x$. Contract every $B_{uv}$ with $f(u) = f(v)$. After suitable subdivisions, this is a morphism of metric graphs.

For $p\in\Gamma$, define
\[\operatorname{Adj}(p)= 
\begin{cases}
\{x,y\}&\text{ if } \varphi_0(p)\text{ lies in the interior of an edge }xy\text{ of }T';\\
\{y\in V(T'):xy\in E(T')\}&\text{ if } \varphi_0(p)=x\in V(T').
\end{cases} \]
 For $q\in\operatorname{Adj}(p)$, let $\Omega_{p,q}$ be the closure of the component of $\Omega\setminus\{\varphi_0(p)\}$ containing $q$. Attach at $p$ exactly $D_q(p)$ copies of $\Omega_{p,q}$ and map each copy isometrically to $\Omega_{p,q}$, omitting the attachment when $D_q(p)=0$. Let $\rho:\Gamma'\to\Gamma$ be the resulting tropical modification and $\varphi:\Gamma'\to\Omega$ the resulting morphism.

Harmonicity is automatic on the added trees, so fix $p\in\Gamma$. For $q\in\operatorname{Adj}(p)$, let $a_q(p)$ be the sum of the expansion factors along the tangent directions at $p$ mapped by $\varphi_0$ toward $q$. The corresponding directional degree of $\varphi$ is $a_q(p)+D_q(p)$. If $\varphi_0(p)$ lies in the interior of an edge $xy$ of $T'$, let $u\to v$ be its noncontracted preimage edge in $T$. Since $\operatorname{div}(f_{uv})=D_u-D_v=D_x-D_y$, while the sums of the outgoing slopes of $f_{uv}$ at $p$ in the directions mapped toward $y$ and $x$ are $a_y(p)$ and $-a_x(p)$, respectively, we obtain $D_x(p)-D_y(p)=a_y(p)-a_x(p)$. Thus $a_x(p)+D_x(p)=a_y(p)+D_y(p)$. If instead $\varphi_0(p)=x\in V(T')$, let $q$ be adjacent to $x$, and order the unique preimage edge $uv$ of $xq$ so that $\pi(u)=x$ and $\pi(v)=q$. Then $\operatorname{div}(f_{uv})=D_u-D_v=D_x-D_q$. By Lemma~\ref{lem:transition-functions} and the definition of $\varphi_0$, the nonzero outgoing slopes of $f_{uv}$ at $p$ are precisely those mapped toward $q$, and their sum is $a_q(p)$. Hence $a_q(p)=D_x(p)-D_q(p)$, so $a_q(p)+D_q(p)=D_x(p)$, independently of $q$. Therefore $\varphi$ is harmonic.

The morphism $\varphi$ is non-degenerate. Indeed, for every $v\in V_{-}(\Gamma)$, $m_\varphi(v)=D_{\pi(v)}(v)=D_v(v)\geq1$ by Lemma~\ref{lem:rank-determining-criterion}, while every vertex of $G_{-}(\Gamma')$ not belonging to $V_{-}(\Gamma)$ either lies on an attached tree or is an attachment point at which such a tree contributes positive local degree.

Finally, fix $z\in V(T')$ and compute the coefficient of $\rho_*(\varphi^*(z))$ at each $p\in\Gamma$. If $\varphi_0(p)=z$, this coefficient is $m_\varphi(p)=D_z(p)$. Otherwise, let $q\in\operatorname{Adj}(p)$ be the unique vertex in the direction of $z$. Precisely the $D_q(p)$ copies of $\Omega_{p,q}$ contain a point above $z$, and we claim that $D_q(p)=D_z(p)$. This is immediate if $q=z$. Otherwise, let $uv$ be the unique noncontracted edge of $T$ mapping to the first edge in the direction from $\varphi_0(p)$ toward $z$, ordered so that $v\in\pi^{-1}(q)$, and choose $w\in\pi^{-1}(z)$. The component $U$ of $\Gamma\setminus\{v\}$ containing $p$ does not contain $w$, so Lemma~\ref{lem:agreement-on-components-general} gives $D_v|_U=D_w|_U$. Since $D_q=D_v$ and $D_z=D_w$, it follows that $D_q(p)=D_z(p)$. Thus the coefficient of $\rho_*(\varphi^*(z))$ at $p$ is $D_z(p)$ in every case. Hence $\rho_*(\varphi^*(z))=D_z\sim D$, so $D$ is tree-induced.
\end{proof}

\begin{Cor}\label{cor:pseudocompact-gonality-equivalence}
A metric graph of pseudocompact type is $d$-gonal if and only if it is divisorially $d$-gonal.
\end{Cor}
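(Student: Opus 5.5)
The corollary follows by combining the general implication recorded at the end of Section~\ref{sec:tropical-prelims} with Theorem~\ref{thm:pseudocompact-tree-induced}. For the forward direction nothing specific to pseudocompact type is needed. If $\Gamma$ is $d$-gonal, there is a degree-$d$ non-degenerate harmonic morphism $\varphi:\Gamma'\to T$ from a tropical modification $\rho:\Gamma'\to\Gamma$ to a metric tree. As shown in the preliminaries, each fiber $\varphi^*(t)$ has rank at least $1$. The pushforward $\rho_*(\varphi^*(t))$ has degree $d$, and since retraction preserves rank, it also has rank at least $1$. Hence $\Gamma$ is divisorially $d$-gonal.

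For the converse, suppose $\Gamma$ is of pseudocompact type and $D$ is a degree-$d$ divisor with $\operatorname{rk}(D)\geq1$. By Theorem~\ref{thm:pseudocompact-tree-induced}, $D$ is tree-induced. So there are a tropical modification $\rho:\Gamma'\to\Gamma$, a non-degenerate harmonic morphism $\varphi:\Gamma'\to T$ to a metric tree, and a point $t\in T$ with $\rho_*(\varphi^*(t))\sim D$.

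It remains to check that $\deg(\varphi)=d$, since then $\varphi$ directly witnesses $d$-gonality. By definition, $\deg(\varphi)=\sum_{x\in\varphi^{-1}(t)}m_\varphi(x)=\deg\varphi^*(t)$. Moreover, $\rho_*$ preserves degree and linear equivalence preserves degree, so $\deg(\varphi)=\deg\rho_*(\varphi^*(t))=\deg(D)=d$.

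One degenerate case deserves a brief check: when $T$ is a point. There the degree is $\deg F_\varphi=\deg\varphi^*(t)$, so the same computation applies, and the definition of $d$-gonal allows point targets because a point is a metric tree. If one prefers a nontrivial target, one could instead attach a segment and use Lemma~\ref{lem:finite-tropical-modification}, but this is not needed.

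There is no real obstacle here. The only points needing care are that degree is computed correctly through $\rho_*$ and linear equivalence, and the point-target case just discussed.
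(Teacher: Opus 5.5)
Your proposal is correct and follows the same route as the paper, which leaves the corollary unproved as immediate. The forward direction is the remark in Section~\ref{sec:tropical-prelims} that retracted fibers have degree $d$ and rank at least $1$, and the converse is Theorem~\ref{thm:pseudocompact-tree-induced} together with your observation that $\deg(\varphi)=\deg\rho_*(\varphi^*(t))=\deg(D)$; your handling of the point target matches the paper's own use of the constant morphism $\Gamma\to\{t\}$ with $F_\varphi=D_r$.
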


\begin{Cor}\label{cor:pseudocompact-lifting}
Let $\Gamma$ be a metric graph of pseudocompact type, and let $D\in\operatorname{Div}(\Gamma)$ have positive rank. Then there exist an algebraically closed complete non-Archimedean field $K$, a smooth proper connected curve $X$ over $K$ having $\Gamma$ as a skeleton, and a divisor $B$ on $X$ such that $\operatorname{rk}_X(B)\geq1$ and the tropicalization of $B$ is linearly equivalent to $D$. If $\Gamma$ is a metric banana path, then $X$ may be chosen so that $g(X)=g(\Gamma)$.
\end{Cor}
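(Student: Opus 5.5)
The first assertion follows by chaining two earlier results. Let $\Gamma$ be of pseudocompact type and let $D$ have positive rank. By Theorem~\ref{thm:pseudocompact-tree-induced}, $D$ is tree-induced. Proposition~\ref{prop:tree-induced-lift} then supplies an algebraically closed complete non-Archimedean field $K$, a smooth proper connected curve $X/K$ with skeleton $\Gamma$, and a divisor $B$ on $X$ with $\operatorname{rk}_X(B)\geq1$ whose tropicalization is linearly equivalent to $D$. This step needs no further work.

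For the genus-preserving refinement, I would show that when $\Gamma$ is a metric banana path, the divisor $D$ is in fact path-induced, and then apply Proposition~\ref{prop:path-induced-lift}. To do this, I would revisit the construction in the proof of Theorem~\ref{thm:pseudocompact-tree-induced}. There $T=G_{-}(\Gamma)^{\mathrm{simp}}$ is a path, and the target $\Omega$ is the metric quotient $T'$ of $T$ obtained by contracting the edges $uv$ with $f(u)=f(v)$. A quotient of a path by contracting edges is again a path, so $\Omega$ is a metric path, or a single point when $f$ is constant.

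The other ingredient of the construction only attaches copies of the closed subtrees $\Omega_{p,q}\subseteq\Omega$ to the source, mapping each isometrically into $\Omega$. This leaves the target unchanged. Hence the non-degenerate harmonic morphism $\varphi:\Gamma'\to\Omega$ produced there has a path as target. In the constant case the target is a point, which is a degenerate path. I would check that a point counts as a metric path for the definition of path-induced, and also that Proposition~\ref{prop:path-induced-lift} still applies in that case. If the point target causes trouble, the fallback is the following: $D_r$ is effective and every vertex of $V_{-}(\Gamma)$ lies in its support, and one can instead map a tropical modification of $\Gamma$ to a segment. Since a finite harmonic morphism to a point is already covered by the construction in Lemma~\ref{lem:finite-tropical-modification}, which yields a well-contracted cover with path target, the point case should be fine. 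So $D$ is path-induced, and Proposition~\ref{prop:path-induced-lift} gives $X$ with $g(X)=g(\Gamma)$.

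The only delicate point is confirming that the target $T'$ in the theorem's proof is really a path when $T$ is. This is immediate, because contracting edges of a path leaves a path. Everything else is direct citation.
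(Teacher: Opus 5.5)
Your proposal is correct and follows the paper's own proof. The first claim is Theorem~\ref{thm:pseudocompact-tree-induced} combined with Proposition~\ref{prop:tree-induced-lift}. For banana paths, the target of the construction is a quotient of the path $T$ by contracting edges, hence again a path, and this includes the one-point quotient when $f$ is constant, which the paper counts as a degenerate path. So Proposition~\ref{prop:path-induced-lift} applies.

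Your fallback for the point case is unnecessary, and its justification is wrong as stated. The constant map is non-degenerate, not finite. Applying the construction of Lemma~\ref{lem:finite-tropical-modification} to it attaches one target leg per contracted model edge, so it yields a star rather than a path target. If you really wanted a segment target, you would instead attach $D_r(p)$ copies of a segment at each $p$ and send $\Gamma$ to one endpoint.
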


\begin{proof}
The first statement follows from Theorem~\ref{thm:pseudocompact-tree-induced} and Proposition~\ref{prop:tree-induced-lift}. If $\Gamma$ is a metric banana path, then $T$ is a path, and every quotient of $T$ obtained by contracting edges is again a path. Thus the proof of Theorem~\ref{thm:pseudocompact-tree-induced} exhibits every positive-rank divisor as path-induced. Proposition~\ref{prop:path-induced-lift} then gives the second statement.
\end{proof}


\section{Gonality of metric banana paths}\label{sec:MBP-BN-theory}

We now study the Brill--Noether theory of metric banana paths. 
Related work on graphs of a similar type appears in \cite{PS} and the forthcoming paper \cite{Lo}.

Let $\Gamma$ be a metric banana path, and write $B_1,\dots,B_N$ for its bananas in order. Write $V(G_{-}(\Gamma)^{\mathrm{simp}})=\{v_0,\dots,v_N\}$, ordered along the path, so that $B_i$ has canonical vertices $v_{i-1}$ and $v_i$. Two metric banana paths with the same $N$ and the same genera $g(B_i)$ will be said to have the same combinatorial type.

For a fixed combinatorial type $G$, we identify the space of metrics with $\mathbb R_{>0}^{E(G)}$. All loci and codimensions in this section are taken in this space.

We write $\rho(g,d,r)=g-(r+1)(g-d+r)$.

\begin{Lemma}\label{lem:rank-one-gluing}
Let $\Gamma_1,\Gamma_2$ be metric graphs, and let $\Gamma$ be obtained by identifying $v\in\Gamma_1$ with $v'\in\Gamma_2$, writing $q$ for the resulting point.
\begin{enumerate}[label=(\roman*)]
\item If $D$ and $D'$ have rank at least $1$ on $\Gamma_1$ and $\Gamma_2$, respectively, then $D+D'-q$ has rank at least $1$ on $\Gamma$.
\item If $\Gamma_1$ is not Brill--Noether general for $r=1$, then $\Gamma$ is not Brill--Noether general for $r=1$.
\end{enumerate}
\end{Lemma}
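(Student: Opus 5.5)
For part (i), I would use the rank criterion "$\operatorname{rk}(E)\geq 1$ iff $|E-p|\neq\varnothing$ for every $p\in\Gamma$" directly rather than Lemma~\ref{lem:rank-determining-criterion}. Fix $p\in\Gamma$. By symmetry suppose $p\in\Gamma_1$; the case $p=q$ is covered by either side.

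Since $\operatorname{rk}_{\Gamma_1}(D)\geq1$, there is $f_1\in\operatorname{Rat}(\Gamma_1)$ with $D+\operatorname{div}(f_1)-p\geq0$ on $\Gamma_1$. Since $\operatorname{rk}_{\Gamma_2}(D')\geq1$, there is $f_2\in\operatorname{Rat}(\Gamma_2)$ with $D'+\operatorname{div}(f_2)-v'\geq0$ on $\Gamma_2$. After adding a constant to $f_2$ we may assume $f_1(v)=f_2(v')$. The two functions then glue to a rational function $f$ on $\Gamma$.

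Away from $q$, $\operatorname{div}(f)$ agrees with $\operatorname{div}(f_1)$ on $\Gamma_1$ and with $\operatorname{div}(f_2)$ on $\Gamma_2$. At $q$, the outgoing slopes add, so
\[\operatorname{div}(f)(q)=\operatorname{div}(f_1)(v)+\operatorname{div}(f_2)(v').\]
Therefore
\[D+D'-q-p+\operatorname{div}(f)=\bigl(D+\operatorname{div}(f_1)-p\bigr)+\bigl(D'+\operatorname{div}(f_2)-v'\bigr),\]
where the two summands are regarded as divisors on $\Gamma$ via the inclusions, and the point $v'$ is identified with $q$. Both summands are effective, so $|D+D'-q-p|\neq\varnothing$. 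The only thing to check carefully is the bookkeeping at $q$: the $-q$ is exactly absorbed by the $-v'$ in the second factor. This is where the choice of base point $v'$ on $\Gamma_2$ matters, and it is the only delicate step.

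For part (ii), I interpret "Brill--Noether general for $r=1$" as: a divisor of degree $d$ and rank at least $1$ exists only if $\rho(g,d,1)\geq0$. Suppose $\Gamma_1$ has genus $g_1$ and a divisor $D$ of degree $d_1$ and rank at least $1$ with $\rho(g_1,d_1,1)<0$, that is, $2d_1<g_1+2$. Let $g_2=g(\Gamma_2)$. Take $D'$ on $\Gamma_2$ of rank at least $1$ and degree $d_2$ as small as possible relative to $g_2$. The crude bound $d_2\leq g_2+1$ (any degree-$(g_2+1)$ divisor has rank at least $1$ by Riemann--Roch) is not good enough. I would instead use the gonality bound $d_2\leq\lfloor (g_2+3)/2\rfloor$; for metric graphs this is known, but it is not stated earlier in the paper.

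By (i), $D+D'-q$ has rank at least $1$ on $\Gamma$, which has genus $g_1+g_2$ and degree $d_1+d_2-1$. We then need
\[2(d_1+d_2-1)<g_1+g_2+2.\]
Using $2d_1\leq g_1+1$ and $2d_2\leq g_2+3$, the left side is at most $g_1+g_2+2$. This is equality, not strict inequality, so it is off by one.

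The main obstacle is this parity/off-by-one issue. I would handle it by avoiding the gonality bound on $\Gamma_2$ altogether and choosing $D'$ more cleverly. The alternative $D'=2v'$ has rank at least $1$ only when $\Gamma_2$ is hyperelliptic, so that does not work in general. The cleaner route is induction on $g_2$: reduce to the case $g(\Gamma_2)=0$ (the tree case) or $g(\Gamma_2)=1$ by splitting $\Gamma_2$, and in those cases pick $D'=v'$ (tree, rank $1$, $d_2=1$) or a degree-$2$ divisor on a genus-one graph. This gives
\[2(d_1+d_2-1)\leq g_1+1+2(d_2-1)<g_1+g_2+2\]
whenever $2(d_2-1)\leq g_2$. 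That condition holds for $d_2\leq 2$ with $g_2\geq 2$, and for trees. If the induction is awkward, I would fall back on the fact that $\Gamma_2$ itself is Brill--Noether general or not, and combine $D$ with a minimal-degree pencil on $\Gamma_2$. In that fallback the strict inequality $2d_1\leq g_1+1$, together with $2d_2\leq g_2+2$ coming from Brill--Noether existence for metric graphs, gives exactly the needed strict inequality.
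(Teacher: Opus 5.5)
Your part (i) is correct and is the paper's argument: gluing $f_1$ and $f_2$ is the same as observing that $\widetilde D+E'\sim D+D'-q$, where $p\in\operatorname{supp}\widetilde D$ and $D'\sim v'+E'$.

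Part (ii), however, has a genuine gap, and it is not a bookkeeping problem. You read ``Brill--Noether general'' as ``no pencil of degree $d$ with $\rho(g,d,1)<0$.'' Under that reading statement (ii) is false, so the off-by-one you ran into cannot be repaired. Here is a counterexample. Let $\Gamma_1$ be a genus-$3$ banana with four edges joining $u$ and $w$, let $\Gamma_2$ be a circle, and glue at $v=u$.
\begin{itemize}
\item $u+w$ is a pencil on $\Gamma_1$ with $\rho(3,2,1)=-1$.
\item $\Gamma$ has genus $4$ but is not hyperelliptic. Suppose $A$ has degree $2$ and rank $1$ on $\Gamma$. Comparing $A_q$ with $A_x$ for $x$ on either side of $q$, via Lemma~\ref{lem:agreement-on-components-general}, forces $A_q=2q$ and forces $2u$ to have rank $1$ on $\Gamma_1$.
\item A slope count at $u$, using Lemma~\ref{lem:reduced-maximum-locus}, shows $|2u-w|=\varnothing$ on the banana.
\end{itemize}
So $\Gamma$ carries no pencil with negative $\rho$. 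This $\Gamma$ is a metric banana path with bananas of genus $3$ and $1$, so your reading would also contradict Corollary~\ref{NotBNgen}.

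Your proposed repairs fail for concrete reasons:
\begin{itemize}
\item $\Gamma_2$ need not have a cut vertex, so there is no induction by splitting it.
\item Your base case $g_2=1$, $d_2=2$ violates your own condition $2(d_2-1)\le g_2$.
\item In the fallback, Brill--Noether existence only guarantees pencils with $2d_2\ge g_2+2$, not $2d_2\le g_2+2$. For odd $g_2$ the smallest guaranteed degree has $2d_2=g_2+3$.
\end{itemize}

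The missing idea is that the notion is dimensional, as the paper's proof makes explicit. $\Gamma_1$ fails to be Brill--Noether general when $\dim W^1_d(\Gamma_1)>\rho(g_1,d,1)$ for some $d$, and it is this excess dimension that gets transported to $\Gamma$. With that reading, the choice $e=g_2+1$ that you discarded as crude is exactly right. By Riemann--Roch, $W^1_e(\Gamma_2)=\operatorname{Pic}^e(\Gamma_2)$, whose dimension is exactly $g_2=\rho(g_2,e,1)$, so no dimension estimate on $\Gamma_2$ is needed. Part (i) together with the cut-point decomposition $\operatorname{Pic}^0(\Gamma)\simeq\operatorname{Pic}^0(\Gamma_1)\times\operatorname{Pic}^0(\Gamma_2)$ gives an injection
\[
W^1_d(\Gamma_1)\times\operatorname{Pic}^e(\Gamma_2)\hookrightarrow W^1_{d+e-1}(\Gamma),\qquad ([D],[D'])\mapsto[D+D'-q].
\]
Since $\rho$ is exactly additive,
\[
\rho(g_1+g_2,d+e-1,1)=\rho(g_1,d,1)+\rho(g_2,e,1),
\]
you obtain $\dim W^1_{d+g_2}(\Gamma)\ge\dim W^1_d(\Gamma_1)+g_2>\rho(g_1+g_2,d+g_2,1)$. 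In the example above this gives $\dim W^1_3(\Gamma)\ge1>0=\rho(4,3,1)$. That is how $\Gamma$ fails to be Brill--Noether general even though it carries no pencil with negative $\rho$.
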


\begin{proof}
For (i), choose effective divisors $E,E'$ such that $D\sim v+E$ and $D'\sim v'+E'$. If $x\in\Gamma_1$, choose an effective divisor $\widetilde D\sim D$ containing $x$; then $\widetilde D+E'\sim D+D'-q$ contains $x$. The same argument applies if $x\in\Gamma_2$, so $D+D'-q$ has rank at least $1$.

For (ii), write $g_i=g(\Gamma_i)$ and suppose $\dim W^1_d(\Gamma_1)>\rho(g_1,d,1)$. Set $e=g_2+1$. By Riemann--Roch, $W^1_e(\Gamma_2)=\operatorname{Pic}^e(\Gamma_2)$, which has dimension $g_2=\rho(g_2,e,1)$. The cut-point decomposition $\operatorname{Pic}^0(\Gamma)\simeq\operatorname{Pic}^0(\Gamma_1)\times\operatorname{Pic}^0(\Gamma_2)$ and (i) give an embedding
\[
W^1_d(\Gamma_1)\times\operatorname{Pic}^e(\Gamma_2)\longrightarrow W^1_{d+e-1}(\Gamma).
\]
Hence
\[
\dim W^1_{d+e-1}(\Gamma)\geq\dim W^1_d(\Gamma_1)+g_2>\rho(g_1+g_2,d+e-1,1),
\]
so $\Gamma$ is not Brill--Noether general for $r=1$.
\end{proof}

\begin{Cor}\label{NotBNgen}
If a metric banana path contains a banana of genus at least $3$, then it is not Brill--Noether general.
\end{Cor}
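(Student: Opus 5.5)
The plan is to reduce to Lemma~\ref{lem:rank-one-gluing}(ii). The first step is to exhibit a single banana of genus at least $3$ as a graph that is not Brill--Noether general for $r=1$. The second step is to propagate this failure through the cut points of the banana path.

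\textbf{Step 1: a single banana.} Let $B$ be a banana of genus $h\ge 3$ with vertices $u$ and $v$. Then $B$ is a metric graph with two vertices joined by $h+1$ parallel edges, of arbitrary lengths. The divisor $u+v$ has rank at least $1$ on $B$. To see this, let $x$ lie in the interior of an edge $e$ at distance $a$ from $u$. Take the rational function with slope $-1$ on every edge leaving $u$ up to distance $a$ along each edge, constant beyond that; on $e$ it slopes down to $x$. Adjusting this so that only $e$ carries the chip at $x$ is awkward. The cleaner route is to use $D=(h+1)u$, or simply $D=2u$ whenever $B$ is hyperelliptic. So the first thing I will try is to show that every banana graph is hyperelliptic, i.e.\ that $W^1_2(B)\neq\varnothing$. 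I will do this via the degree-$2$ harmonic morphism given by Chan's characterization, or directly via Lemma~\ref{lem:rank-determining-criterion}: take $D=u+v$ and check $D_u(u)\ge1$ and $D_v(v)\ge1$, which is trivial since $u+v$ is already $u$- and $v$-reduced. Since $V_-(B)$ also contains the midpoints only if there are loops, and there are none, $V_-(B)=\{u,v\}$ and $\operatorname{rk}(u+v)\ge1$. Now $\rho(h,2,1)=h-2(h-1)=2-h<0$ for $h\ge3$, while $W^1_2(B)$ is nonempty. Hence $\dim W^1_2(B)\ge 0>\rho(h,2,1)$, and $B$ is not Brill--Noether general for $r=1$.

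\textbf{Step 2: propagation.} A metric banana path $\Gamma$ with bananas $B_1,\dots,B_N$ is obtained by successively gluing single bananas at the cut vertices $v_i$. Suppose $g(B_j)\ge3$. Write $\Gamma$ as the union of $B_1\cup\dots\cup B_j$ and $B_{j+1}\cup\dots\cup B_N$ glued at $v_j$. Further write $B_1\cup\dots\cup B_j$ as $B_j$ glued to $B_1\cup\dots\cup B_{j-1}$ at $v_{j-1}$. Applying Lemma~\ref{lem:rank-one-gluing}(ii) twice, each time with the graph known to be non-general in the role of $\Gamma_1$, shows that $\Gamma$ is not Brill--Noether general for $r=1$. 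In particular, $\Gamma$ is not Brill--Noether general. The degenerate cases $j=1$ or $j=N$ simply skip one gluing.

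The main obstacle is interpretive rather than technical. I need that the notion ``not Brill--Noether general'' used in the statement is the one witnessed by $\dim W^1_d>\rho$, including the case where $\rho<0$ and $W^1_d\neq\varnothing$. I will adopt the convention that $W^r_d(\Gamma)$ nonempty with $\rho<0$, or of dimension exceeding $\rho$, violates generality. This matches the hypothesis used in Lemma~\ref{lem:rank-one-gluing}(ii), once one notes that the lemma's dimension inequality still works when $\dim W^1_d(\Gamma_1)=0>\rho$.
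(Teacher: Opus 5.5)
Your proof is correct and follows the paper's argument. You show that $u+v$ has positive rank on the banana, so $W^1_2(B)\neq\varnothing$ while $\rho(h,2,1)=2-h<0$, and then propagate the failure through the cut vertices with Lemma~\ref{lem:rank-one-gluing}(ii). The paper gets $\operatorname{rk}(u+v)\geq1$ from the folding involution, a degree-$2$ harmonic morphism to a tree; your check that $u+v$ is both $u$- and $v$-reduced, against the rank-determining set $V_{-}(B)=\{u,v\}$, works equally well. Your passing remark that $D=2u$ would work is false on such a banana, since $2u$ is already $v$-reduced with coefficient $0$ at $v$, but you never use it.
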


\begin{proof}
Let $B$ be such a banana, with canonical vertices $v,w$. The involution interchanging $v$ and $w$ and folding each edge gives a degree-$2$ harmonic morphism from $B$ to a tree, so $v+w$ has rank at least $1$. Thus $W^1_2(B)\neq\varnothing$, while $\rho(g(B),2,1)=2-g(B)<0$. Hence $B$ is not Brill--Noether general, and the result follows from Lemma~\ref{lem:rank-one-gluing}(ii).
\end{proof}

We now restrict to metric banana paths with $N$ bananas, all of genus $2$. Thus $g(\Gamma)=2N$.

\begin{Def}\label{def:generic-banana}
For each $i$, write $\ell_{i,0},\ell_{i,1},\ell_{i,2}$ for the edge lengths of $B_i$. We say that $\Gamma$ is generic if, whenever $\ell_{i,j}/\ell_{i,k}=p/q$ for $j\neq k$ and relatively prime positive integers $p,q$, one has $p+q>N-1$.
\end{Def}

If $C\subset B_i$ consists of two edges whose length ratio is $p/q$ in lowest terms, then $[v_{i-1}-v_i]$ has order $p+q$ in $\operatorname{Pic}^0(C)$. Hence, if $\Gamma$ is generic, then $n v_{i-1}\not\sim_C n v_i$ for every genus-$1$ subgraph $C\subset B_i$ and every $1\leq n\leq N-1$.

\begin{Lemma}\label{LSBanana}
Let $B$ be a genus-$2$ banana with canonical vertices $v,w$.
\begin{enumerate}[label=(\roman*)]
\item If $B$ is a banana in a generic metric banana path with $N$ genus-$2$ bananas and $1\leq a\leq N$, then $av\not\sim aw$ and $av\not\sim(a-1)w+x$ for every $x\in B\setminus\{v,w\}$.
\item For every $a\geq2$, there is an effective divisor $E$ of degree $2$ such that $av\sim(a-2)w+E$.
\item For every $a\geq3$, the condition $av\sim aw$ occurs on a codimension-$2$ locus of edge lengths, while $av\sim(a-1)w+x$ for some $x\in B$ occurs on a nonempty relatively open subset of a codimension-$1$ locus.
\end{enumerate}
\end{Lemma}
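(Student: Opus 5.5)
The plan is to reduce everything to explicit computations on the three-edge banana $B$ with edge lengths $\ell_0,\ell_1,\ell_2$. The tool is the description of rational functions on $B$. A rational function $f$ with $\operatorname{div}(f)$ supported on $\{v,w\}$ is linear on each edge $e_j$ with integer slope $s_j$ pointing from $v$ to $w$. Continuity forces $s_0\ell_0=s_1\ell_1=s_2\ell_2$. Then $\operatorname{div}(f)=-(s_0+s_1+s_2)v+(s_0+s_1+s_2)w$, up to sign conventions. When one interior point $x\in e_k$ is allowed, the slope on $e_k$ may change by one at $x$. This turns each linear-equivalence question into a small Diophantine problem in the $\ell_j$.

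For (i), first suppose $av\sim aw$ with $1\le a\le N$. Then there are nonnegative slopes with $\sum s_j=a$ and $s_j\ell_j$ all equal. If some $s_j=0$, all are zero, so $a=0$, a contradiction. Hence all $s_j\ge1$, and $s_0\ell_0=s_1\ell_1$ gives $\ell_0/\ell_1=s_1/s_0$. In lowest terms $p/q$ we then have $p+q\le s_0+s_1\le a-1\le N-1$, contradicting genericity. Equivalently, restricting to the genus-$1$ subgraph $e_0\cup e_1$ yields the order statement recorded after Definition~\ref{def:generic-banana}.

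Next suppose $av\sim(a-1)w+x$ with $x$ interior to $e_k$. The slopes on the two edges $e_i,e_j$ not containing $x$ again satisfy $s_i\ell_i=s_j\ell_j$. On $e_k$ the slope jumps by $1$ at $x$. If $s_i$ or $s_j$ is $0$, then both vanish, and $e_k$ alone carries the total $a$. But the function must return to the same value at $w$, so the slope on $e_k$ is $\sigma$ before $x$ and $\sigma-1$ after $x$, with net change $0$. This forces $\sigma>0>\sigma-1$, which is impossible. Therefore $s_i,s_j\ge1$, and their sum is at most $a-1\le N-1$ because the $e_k$ slope near $v$ is $\ge1$ (or $\ge0$ in the degenerate subcase). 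Genericity is again violated.

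For (ii), I use hyperellipticity of $B$: $v+w$ has rank $1$ by the folding harmonic morphism in Corollary~\ref{NotBNgen}, so $2v\sim v+w+\ldots$. More directly, $2v\sim E_2$ for a suitable effective degree-$2$ divisor, namely the hyperelliptic conjugate pair. I then induct, using $v\sim w+(v-w)$ and rank-$1$ moves to shift $a-2$ chips from $v$ to $w$. Concretely, I take the function with slope $1$ on each edge starting at $v$, stopping on each edge at distance $\min_j\ell_j$. Repeating this chip-firing from $v$ moves chips toward $w$ until only two remain on interiors. I expect the bookkeeping of this chip-firing to be routine.

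Part (iii) is the main obstacle: dimension counting in $\mathbb R_{>0}^3$. The condition $av\sim aw$ with $a\ge3$ requires positive slopes with $s_0\ell_0=s_1\ell_1=s_2\ell_2$ and $\sum s_j=a$. This gives two independent linear conditions on the lengths, a finite union of codimension-$2$ rays, and it is nonempty since $(1,1,1)$ works for $a=3$. The second condition splits into two cases. The case $s_i\ell_i=s_j\ell_j$ on two edges, with the third edge containing $x$, gives one linear condition. The remaining freedom is where $x$ sits, and $x$ is determined by matching $s_i\ell_i$ with the piecewise integral of the slope on $e_k$. This matching requires an inequality, which gives a relatively open subset. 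I would check that this subset is nonempty by exhibiting explicit slopes, for instance $s_i=s_j=1$ with slope $a-2\ge1$ then $a-3$ on $e_k$.
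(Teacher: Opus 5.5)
Your arguments for (i) and (iii) are essentially the paper's. In (i), on the two edges avoiding $x$, continuity gives $s_i\ell_i=s_j\ell_j$ with $s_i+s_j\le a-1\le N-1$, contradicting genericity. In (iii), you impose $m_0\ell_0=m_1\ell_1=m_2\ell_2$, or only one such equality with the third length in an open interval; your choice $s_i=s_j=1$, with slopes $a-2$ and then $a-3$ on $e_k$, does produce such an interval.

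In (i), however, delete the parenthetical ``or $\ge 0$ in the degenerate subcase''. If the slope $\sigma$ of $e_k$ at $v$ could be $0$, you would only get $p+q\le a\le N$, which does not contradict genericity. What you need is a sign comparison. The common value $c=s_i\ell_i=s_j\ell_j$ equals the increase $\sigma d_1+(\sigma-1)d_2$ along $e_k$, where $\sigma=a-s_i-s_j$ and $d_1,d_2>0$ are the lengths of the two pieces of $e_k$. This rules out $c<0$, since then $\sigma>a$ and the increase is positive. It also forces $\sigma\ge1$ when $c>0$, since otherwise the increase is negative.

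The genuine gap is in (ii). The hyperelliptic remarks do not help. The expression $2v\sim v+w+\cdots$ has the wrong degree. Reading the ``hyperelliptic conjugate pair'' as $v+w$, the relation $2v\sim v+w$ would give $v\sim w$, which is false on $B$; for $a=2$ you can simply take $E=2v$.

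The chip-firing step, as you describe it, does not work either: repeatedly firing $v$ alone need not leave only two chips off $w$. Take edge lengths $1,2,3$ and $a=6$, and let $y_1,y_2$ be the points at distance $1$ from $v$ on the edges of lengths $2,3$. Two firings give $2w+2y_1+2y_2$, with four interior chips and no chip left at $v$ to fire. To repair this you would have to fire larger sets (Dhar's algorithm) and use the standard fact that a $w$-reduced divisor carries at most $g(B)=2$ chips away from $w$. That is not routine bookkeeping.

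The paper's argument is the one-liner you are missing. The divisor $av-(a-2)w$ has degree $2=g(B)$, so Riemann--Roch gives $\operatorname{rk}(av-(a-2)w)\ge \deg-g=0$. Hence $av-(a-2)w$ is linearly equivalent to an effective divisor $E$ of degree $2$.
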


\begin{proof}
For (i), suppose that either $av\sim aw$ or $av\sim(a-1)w+x$. In the first case choose any two edges, and in the second choose the two edges not containing $x$. On these two edges a function giving the equivalence is linear with positive integral slopes $m_i,m_j$ satisfying $m_i+m_j\leq a-1$. Continuity gives $m_i\ell_i=m_j\ell_j$, so the order of $[v-w]$ on the corresponding genus-$1$ subgraph is at most $m_i+m_j\leq a-1\leq N-1$, contradicting genericity.

For (ii), $av-(a-2)w$ has degree $2$, so Riemann--Roch on the genus-$2$ graph $B$ shows that it is equivalent to an effective divisor.

For (iii), choose positive integers $m_0,m_1,m_2$ with $m_0+m_1+m_2=a$. The conditions $m_0\ell_0=m_1\ell_1=m_2\ell_2$ are two independent conditions and give $av\sim aw$. For the second statement, impose only $m_0\ell_0=m_1\ell_1$. Choosing $\ell_2$ in a suitable nonempty open interval produces a unique breakpoint $x$ in the third edge at which the slope drops by $1$, giving $av\sim(a-1)w+x$.
\end{proof}

\begin{Prop}\label{prop:gonch2-ban}
Let $\Gamma$ be a metric banana path with $N$ genus-$2$ bananas.
\begin{enumerate}[label=(\roman*)]
\item If $\Gamma$ is generic, then $\operatorname{gon}(\Gamma)=N+1$, which is the gonality of a general curve of genus $2N$.
\item For every $d$ with $3\leq d\leq N+1$, there is a locus of codimension $-\rho(2N,d,1)=2(N+1-d)$ consisting of $d$-gonal metric banana paths of this combinatorial type.
\end{enumerate}
\end{Prop}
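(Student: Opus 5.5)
The plan is to reduce gonality on a banana path to a combinatorial analysis of reduced divisors at the canonical vertices $v_0,\dots,v_N$, using Lemma~\ref{lem:rank-determining-criterion}: a divisor $D$ has positive rank iff $D_{v}(v)\geq 1$ for all $v\in V_{-}(\Gamma)$. Here $V_{-}(\Gamma)=\{v_0,\dots,v_N\}$, because a genus-$2$ banana has three parallel edges and no loops.

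\emph{Upper bound.} We show $\operatorname{gon}(\Gamma)\leq N+1$ for all metrics by building an explicit divisor of degree $N+1$ and rank $1$. Start from $D=(N+1)v_0$. Walk down the path using Lemma~\ref{LSBanana}(ii) on each banana, in the form $av_{i-1}\sim (a-2)v_i+E_i$. This moves the chip stack across $B_i$ while losing two chips, which are deposited in $B_i$. Lemma~\ref{lem:agreement-on-components-general} then identifies the $v_i$-reduced divisor. One checks that $D_{v_i}(v_i)\geq 1$ for every $i$, so that $D$ has positive rank.

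This naive count runs out after about $N/2$ bananas. So we need a balanced divisor instead, roughly $\lceil (N+1)/2\rceil$ chips at each end, or a rank-one gluing argument. The gluing route goes through Lemma~\ref{lem:rank-one-gluing}(i): on a single genus-$2$ banana, $v+w$ has rank $1$, and gluing successively gives rank-one divisors of degree $N+1$. The degrees add as $2+2-1+\cdots$, which gives $2N-(N-1)=N+1$. So for the upper bound I would simply iterate Lemma~\ref{lem:rank-one-gluing}(i) with $D_i=v_{i-1}+v_i$ on $B_i$. This yields $\sum_i (v_{i-1}+v_i)-\sum_{i=1}^{N-1}v_i=v_0+v_N+\sum_{i=1}^{N-1}v_i$, which has degree $N+1$ and positive rank.

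\emph{Lower bound for generic $\Gamma$.} Suppose $D$ has degree $d\leq N$ and positive rank. Take $D_{v_0}$, which has at least one chip at $v_0$. Transport it along the path, tracking $a_i=D_{v_i}(v_i)$ and the chips stored inside the bananas. The key claim is that passing the reduced divisor from $v_{i-1}$ to $v_i$ across $B_i$ costs at least one chip. By Lemma~\ref{LSBanana}(i), under genericity with $a\leq N$, no function moves $av_{i-1}$ to $aw$ or to $(a-1)w+x$. Hence the $v_i$-reduced divisor, restricted to $B_i\cup\{\text{right side}\}$, loses at least two chips at $v_i$ relative to what sat at $v_{i-1}$, unless chips were already present inside $B_i$. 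A counting argument then bounds $\deg D\geq N+1$: every banana must either contain a chip of $D_{v_0}$ or absorb a drop. This bookkeeping is the main obstacle. I would first try an induction on $N$, cutting at $v_{N-1}$ and using Lemma~\ref{lem:agreement-on-components-general} to compare $D_{v_{N-1}}$ and $D_{v_N}$ on $B_N$. The step to establish is that $\deg(D|_{B_N\setminus v_{N-1}})\geq 1$ after reduction to $v_{N-1}$, forcing a degree-$(d-1)$ positive-rank divisor on the first $N-1$ bananas.

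\emph{Part (ii).} Impose the conditions of Lemma~\ref{LSBanana}(iii) on selected bananas. Making $2v_{i-1}\sim 2v_i$-type relations or $av\sim aw$ conditions hold on $N+1-d$ bananas lets chips cross them for free, saving one chip each. Each such banana costs codimension $2$, giving the total $2(N+1-d)$ and a divisor of degree $d$. Genericity on the remaining bananas, together with the lower bound argument, shows the gonality is exactly $d$ on this locus.
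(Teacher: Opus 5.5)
Your upper bound is correct and more self-contained than the paper's, which just invokes the general bound $\lfloor (g+3)/2\rfloor=N+1$. Iterating Lemma~\ref{lem:rank-one-gluing}(i) with $v_{i-1}+v_i$ on each $B_i$ gives $\sum_{i=0}^N v_i$, of degree $N+1$ and positive rank.

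The gap is the lower bound in (i). You defer it as ``bookkeeping'', and the induction you propose does not close. The step you want, that $D_{v_{N-1}}$ has a chip in $B_N\setminus\{v_{N-1}\}$, is not a local fact. Suppose $D_{v_{N-1}}|_{B_N}=a_{N-1}v_{N-1}$ with $a_{N-1}\geq3$. Then Lemma~\ref{LSBanana}(ii) gives $a_{N-1}v_{N-1}\sim(a_{N-1}-2)v_N+E$, so positivity at $v_N$ holds and nothing on $B_N$ is violated. In that case the restriction to $B_1\cup\dots\cup B_{N-1}$ still has degree $d$ and positive rank, so your inductive hypothesis (gonality $N$) only yields $d\geq N$. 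Ruling out $d=N$ needs the fact that a degree-$N$ positive-rank divisor on the first $N-1$ bananas cannot have $a_{N-1}\geq 3$. That is strictly stronger than what you carry.

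Your heuristic ``each banana either contains a chip or absorbs a drop'' has the same weakness. You use Lemma~\ref{LSBanana}(i) only to say that a chip-free banana drops $a_i$ by $2$. Nothing then stops a banana holding a single chip from \emph{raising} $a_i$ by one (the chip joins the stack), and with only that information the count gives a bound of order $2N/3$.

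The missing idea is to compare reduced divisors at both ends of every banana. Let $a_i=D_{v_i}(v_i)\geq1$. Lemma~\ref{lem:agreement-on-components-general} shows that $D_{v_{i-1}}$ and $D_{v_i}$ agree off $B_i$ and restrict to equivalent divisors of a common degree $c$ on $B_i$. Put $k_i=c-a_{i-1}$ and $\delta_i=c-a_i$. Lemma~\ref{LSBanana}(i), applied with either endpoint in the role of $v$ (cancelling when the stray chip sits at the other endpoint), gives $k_i+\delta_i\geq2$. Now $k_i$ is the degree of $D_{v_0}$ on $B_i\setminus\{v_{i-1}\}$, and $\delta_i$ is the degree of $D_{v_N}$ on $B_i\setminus\{v_i\}$. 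Hence $\sum_i k_i\leq d-1$ and $\sum_i\delta_i\leq d-1$, so $2N\leq 2d-2$. In your one-sided language, the invariant to carry is $a_i\leq a_{i-1}+2k_i-2$, which telescopes to $2N\leq 2d-a_0-a_N$.

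Part (ii) is also only heuristic.
\begin{itemize}
\item A relation $2v_{i-1}\sim 2v_i$ cannot hold on a genus-$2$ banana: three slopes of one sign sum to at least $3$ in absolute value. This is why Lemma~\ref{LSBanana}(iii) starts at $a\geq3$.
\item Your count of $N+1-d$ loss-$0$ bananas is right, but you need an explicit divisor, and parity matters. Start from a single stack $dv_k$ and use only loss-$0$ and loss-$2$ crossings. The $d-1$ loss-$2$ bananas then split as $k_L+k_R=d-1$, with $d-2k_L\geq1$ and $d-2k_R\geq1$, which is impossible for even $d$.
\item The paper handles $d=2k$ by replacing one loss-$0$ banana with two loss-$1$ bananas $B_k$ and $B_{N-k+1}$, at the same codimension. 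Alternatively, you could glue $v_0+v_1$ on $B_1$ onto the odd-degree construction on $B_2\cup\dots\cup B_N$ via Lemma~\ref{lem:rank-one-gluing}(i).
\item The statement only asks for $d$-gonal metrics. Your extra claim that the gonality is exactly $d$ rests on the lower-bound argument that is currently missing.
\end{itemize}
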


\begin{proof}
For (i), the general upper bound for the gonality of a metric graph of genus $2N$ gives $\operatorname{gon}(\Gamma)\leq N+1$.

Suppose that $D$ has rank at least $1$ and degree $d\leq N$. For each $i$, let $D_{v_i}$ be the $v_i$-reduced divisor equivalent to $D$ and set $a_i=D_{v_i}(v_i)$. Then $a_i\geq1$.

Fix $1\leq i\leq N$. By Lemma~\ref{lem:agreement-on-components-general}, the restrictions of $D_{v_{i-1}}$ and $D_{v_i}$ outside $B_i$ agree. Let $A,A'$ be their remaining restrictions to $B_i$. Then $A\sim A'$. Put $c=\deg A=\deg A'$, $k_i=c-a_{i-1}$, and $\delta_i=c-a_i$.

We claim that $k_i+\delta_i\geq2$. If $k_i+\delta_i=0$, then $cv_{i-1}\sim cv_i$, contrary to Lemma~\ref{LSBanana}(i). If $k_i+\delta_i=1$, then, after possibly interchanging $v_{i-1}$ and $v_i$, we have $cv_{i-1}\sim(c-1)v_i+x$ for some $x\in B_i$. If $x=v_{i-1}$, cancelling it gives $(c-1)v_{i-1}\sim(c-1)v_i$; otherwise Lemma~\ref{LSBanana}(i) applies directly. In either case we obtain a contradiction.

The quantities $k_i$ count the degree lying in $B_i$ away from $v_{i-1}$ in $D_{v_{i-1}}$, while the $\delta_i$ do the same away from $v_i$ in $D_{v_i}$. Agreement on components therefore gives $\sum_{i=1}^N k_i\leq d-a_0\leq d-1$ and $\sum_{i=1}^N\delta_i\leq d-a_N\leq d-1$.
Consequently, $2N\leq\sum_{i=1}^N(k_i+\delta_i)\leq 2d-2$,
contradicting $d\leq N$. Hence $\operatorname{gon}(\Gamma)=N+1$.

For (ii), if $d=N+1$, the general upper bound shows that every metric of this combinatorial type is $d$-gonal, giving codimension $0=-\rho(2N,d,1)$. Assume $3\leq d\leq N$ and write $k=\lfloor d/2\rfloor$.

The construction uses the three transitions of Lemma~\ref{LSBanana}: losing $2$ from the coefficient at the next vertex imposes no condition, losing $1$ imposes one condition, and losing $0$ imposes two conditions.

If $d=2k+1$, impose the loss-$0$ condition $d v_{i-1}\sim d v_i$ on $B_{k+1},\dots,B_{N-k}$ and no conditions on the remaining bananas. Starting from $d v_k$, use loss-$2$ transitions through the $k$ bananas on either end. Since $d-2k=1$, this gives effective representatives containing every $v_i$.

If $d=2k$, impose loss $1$ on $B_k$ and $B_{N-k+1}$, loss $0$ on $B_{k+1},\dots,B_{N-k}$, and no conditions on the remaining bananas. Starting from $d v_k$, the first transition on either side leaves coefficient $d-1=2k-1$, after which $k-1$ loss-$2$ transitions again give effective representatives containing every $v_i$.

Since $V_{-}(\Gamma)=\{v_0,\dots,v_N\}$ is rank-determining, the divisor $d v_k$ has rank at least $1$ in either case. If $d=2k+1$, there are $N-2k=N+1-d$ loss-$0$ bananas, so the codimension is $2(N+1-d)$. If $d=2k$, there are $N-2k$ loss-$0$ bananas and two loss-$1$ bananas, so the codimension is $2(N-2k)+2=2(N+1-d)$. Since $\rho(2N,d,1)=2d-2N-2$, this is exactly $-\rho(2N,d,1)$.
\end{proof}

The same methods extend, \emph{mutatis mutandis}, to metric banana paths whose bananas have genus one or two, although we do not pursue this generalization here.


\end{document}

%% file: finitization-figure.tex
\begin{figure}[H]
\centering
\begin{tikzpicture}[
    scale=0.7,
    every node/.style={font=\small},
    >=latex
]

\begin{scope}[shift={(-5.2,3.5)}]

\coordinate (u) at (-1.5,0);
\coordinate (v) at (1.5,0);
\coordinate (x) at (0,2.15);
\coordinate (p) at (-3.0,1.5);
\coordinate (q) at (3.0,1.5);
\coordinate (w) at (0,0);

\coordinate (un)  at (-3,0);
\coordinate (xn1) at (-1.06,3.21);
\coordinate (xn2) at (1.06,3.21);

\draw[MidnightBlue,line width=\edgewidth]
    (u)--(p)
    node[midway,left] {$2$};

\draw[MidnightBlue,line width=\edgewidth]
    (v)--(p);

\draw[MidnightBlue,line width=\edgewidth]
    (x)--(p)
    node[midway,above] {$2$};

\draw[ForestGreen,line width=\edgewidth]
    (u)--(q)
    node[midway,above] {$2$};

\draw[ForestGreen,line width=\edgewidth]
    (v)--(q);

\draw[ForestGreen,line width=\edgewidth]
    (x)--(q)
    node[midway,above] {$2$};

\draw[Red,line width=\edgewidth] (u)--(w);
\draw[Red,line width=\edgewidth] (v)--(w);

\draw[Red,line width=\edgewidth] (u)--(un);

\draw[Red,line width=\edgewidth] (x)--(xn1);
\draw[Red,line width=\edgewidth] (x)--(xn2);

\vertex{u}
\vertex{v}
\vertex{x}
\vertex{p}
\vertex{q}
\vertex{w}
\vertex{un}
\vertex{xn1}
\vertex{xn2}
\end{scope}

\begin{scope}[shift={(5.2,3.5)}]

\coordinate (y)  at (0,2.15);
\coordinate (pp) at (-3,1.5);
\coordinate (qp) at (3,1.5);
\coordinate (r)  at (0,0);

\draw[MidnightBlue,line width=\edgewidth] (y)--(pp);
\draw[ForestGreen,line width=\edgewidth]   (y)--(qp);
\draw[Red,line width=\edgewidth]  (y)--(r);

\vertex{y}
\vertex{pp}
\vertex{qp}
\vertex{r}
\end{scope}

\begin{scope}[shift={(-5.2,-3.85)}]

\coordinate (u) at (-1.5,0);
\coordinate (v) at (1.5,0);
\coordinate (x) at (0,2.15);
\coordinate (p) at (-3.0,1.5);
\coordinate (q) at (3.0,1.5);

\draw[MidnightBlue,line width=\edgewidth]
    (u)--(p)
    node[midway, left] {$2$};

\draw[MidnightBlue,line width=\edgewidth]
    (v)--(p);

\draw[MidnightBlue,line width=\edgewidth]
    (x)--(p)
    node[midway,above] {$2$};

\draw[ForestGreen,line width=\edgewidth]
    (u)--(q)
    node[midway,above] {$2$};

\draw[ForestGreen,line width=\edgewidth]
    (v)--(q);

\draw[ForestGreen,line width=\edgewidth]
    (x)--(q)
    node[midway,above] {$2$};

\draw[black,line width=\edgewidth]
    (u)--(v);

\vertex{u}
\vertex{v}
\vertex{x}
\vertex{p}
\vertex{q}
\end{scope}

\begin{scope}[shift={(5.2,-3.85)}]

\coordinate (y)  at (0,2.15);
\coordinate (pp) at (-3,1.5);
\coordinate (qp) at (3,1.5);

\draw[MidnightBlue,line width=\edgewidth] (y)--(pp);
\draw[ForestGreen,line width=\edgewidth]   (y)--(qp);

\vertex{y}
\vertex{pp}
\vertex{qp}
\end{scope}


\draw[->,thick]
    (-1,5) -- (1,5)
    node[midway,above=3pt] {$\widetilde{\varphi}$};

\draw[->,thick]
    (-1,-2.35) -- (1,-2.35)
    node[midway,above=3pt] {$\varphi$};


\draw[->,thick]
    (-5.2,1.9) -- (-5.2,-0.1)
    node[midway,left=4pt] {$\rho_1$};

\draw[->,thick]
    (5.2,1.9) -- (5.2,-0.1)
    node[midway,right=4pt] {$\rho_2$};

\end{tikzpicture}

\caption{A finitization of a non-degenerate harmonic morphism.}
\label{fig:finitization-square}
\end{figure}